\newcommand{\metric}[2]{\ensuremath{\langle #1, #2\rangle}} 
\newcommand{\HH}{\ensuremath{\mathbb{H}^2\times \mathbb{H}^2}} 
\renewcommand{\epsilon}{\varepsilon} 
\newcommand{\R}{\ensuremath{\mathbb{R}}}
\newcommand{\C}{\ensuremath{\mathbb{C}}}
\renewcommand{\H}{\ensuremath{\mathbb{H}}}
\newtheorem{theorem}{Theorem}[section] 
\newtheorem*{theorem*}{Theorem} 
\newtheorem{lemma}[theorem]{Lemma}
\newtheorem{proposition}[theorem]{Proposition}
\theoremstyle{definition}
\newtheorem{example}[theorem]{Example}
\newtheorem*{remark}{Remark}
\numberwithin{equation}{section}
\title[Lagrangian surfaces in $\HH$]{Lagrangian surfaces in $\HH$ }
\author{Dong Gao \and Joeri Van der Veken \and Anne Wijffels \and Botong Xu}
\address{D.~Gao, Department of Mathematics, School of Science, Beijing University of Civil Engineering and Architecture, Beijing 102616, P.R. China}
\email{gaodong@bucea.edu.cn}
\address{J. Van der Veken \and A. Wijffels, KU\ Leuven, Department of Mathematics, Celestijnenlaan 200B -- Box 2400, 3001 Leuven, Belgium}
\email{joeri.vanderveken@kuleuven.be, anne.wijffels@kuleuven.be}
\address{B.~Xu, Department of Mathematical Sciences, Tsinghua University, Beijing, 100084, P.R. China}
\email{xbt17@mails.tsinghua.edu.cn}
\thanks{This research is supported by collaboration project G0F2319N of the Research Foundation—Flanders (FWO) and the National Natural Science Foundation of China (NSFC). J. Van der Veken and A. Wijffels are supported by project 3E160361 of the KU Leuven Research Fund and J. Van der Veken is supported by the Research Foundation—Flanders (FWO) and the Fonds de la Recherche Scientifique (FNRS) under EOS Project G0H4518N}
\subjclass[2010]{Primary 53C42, 53D12; Secondary 53B25}
\keywords{Lagrangian surface, minimal surface, Riemannian product space}
\date{}
\begin{document}

\begin{abstract}
The Riemannian product of two hyperbolic planes of constant Gaussian curvature $-1$ has a natural K\"ahler structure. In fact, it can be identified with the complex hyperbolic quadric of complex dimension two. In this paper we study Lagrangian surfaces in this manifold. We present several examples and classify the totally umbilical and totally geodesic Lagrangian surfaces, the Lagrangian surfaces with parallel second fundamental form, the minimal Lagrangian surfaces with constant Gaussian curvature and the complete minimal Lagrangian surfaces satisfying a bounding condition on an important function that can be defined on any Lagrangian surface in this particular ambient space.
\end{abstract}

\maketitle

\section{Introduction}

In \cite{CU}, I. Castro and F. Urbano studied Lagrangian surfaces in the Riemannian product of two unit spheres, $\mathbb{S}^2\times \mathbb{S}^2$. Moreover, they showed how this ambient space is isometric to the two-dimensional complex quadric $Q^2$. 

In this paper, we study Lagrangian surfaces of the closely related manifold $\HH$, the Riemannian product of two hyperbolic planes of curvature $-1$. Just like $\mathbb{S}^2\times \mathbb{S}^2$, it is a K\"ahler-Einstein space. Moreover, it is homothetic to the two-dimensional complex hyperbolic quadric $Q^{2\ast}$. The latter manifold is the target space of Gauss maps of spacelike surfaces in the three-dimensional anti-de Sitter space $\H^3_1(-1)$ and the image of such a Gauss map is a Lagrangian surface, see \cite{VdVW}. This provides us immediately with a large family of examples of Lagrangian surfaces in $\HH$. 

In Section 3 of the paper, we define a function $\Gamma: \Sigma \to \R$ on a Lagrangian surface $\Sigma$ in $\HH$ and we prove that $0 \leq \Gamma^2 \leq 1/4$ and that $\Gamma^2 \equiv 0$ if and only if $\Sigma$ is a \textit{product of curves} 
$$ \Phi: I_1 \times I_2 \subseteq \R^2 \to \HH: (s_1,s_2) \mapsto (\beta_1(s_1),\beta_2(s_2)) $$
and $\Gamma^2 \equiv 1/4$ if and only if $\Sigma$ is the image of the \textit{diagonal immersion}
$$ \Phi: \H^2(-1/2) \to \HH: x \mapsto \frac{1}{\sqrt 2}(x,x), $$
which has constant Gaussian curvature $-1/2$ (Theorem \ref{theo:characterization_by_Gamma}). In Section 4, we classify all Lagrangian surfaces in $\HH$ with parallel second fundamental form (Theorem \ref{theo:parallel_sff}) and all the totally umbilical and totally geodesic Lagrangian surfaces in $\HH$ (Theorem \ref{theo:totally_geodesic} and the remark thereafter). Both theorems show the importance of the above mentioned examples: the product of curves and the diagonal immersion. Finally, in Section 5, we prove two results on minimal Lagrangian surfaces in $\HH$. In particular, we give a full classification of minimal Lagrangian surfaces in $\HH$ with constant Gaussian curvature and of complete minimal Lagrangian surfaces in $\HH$ for which the function $\Gamma^2$ is bounded away from $1/4$.

\section{Models for $\HH$}

For integers $k$ and $n$, satisfying $0 \leq k \leq n$, we denote by $\R^n_k$ the pseudo-Euclidean space of dimension $n$ and index $k$. More specifically, $\R^n_k$ is $\R^n$ with the metric
$$ \metric{(a_1, \ldots, a_n)}{(b_1, \ldots, b_n)} = -a_1b_1 - \ldots -a_kb_k + a_{k+1}b_{k+1} + \ldots + a_nb_n. $$

\subsection{$\H^2(c)\times\H^2(c)$ as a submanifold of $\R^6_2$} The three-dimensional Minkowski space $\R^3_1$ carries a Lorentzian cross product $\boxtimes$ (see e.g. \cite{DM} and \cite{K}), defined by
\begin{equation*}
(a_1,a_2,a_3) \boxtimes (b_1,b_2,b_3) = (a_3b_2-a_2b_3, \, a_3b_1-a_1b_3, \, a_1b_2-a_2b_1).
\end{equation*} 
Note that the only difference with the Euclidean cross product is the sign of the first component. The Lorentzian cross product is a bilinear operation satisfying the familiar properties of a cross product with respect to the Lorentzian metric $\metric{\cdot\,}{\cdot}$, namely,
\begin{align}\label{eq:properties_cross_product}
& a \boxtimes b = - b \boxtimes a, \nonumber \\
& \metric{a}{a \boxtimes b} = \metric{b}{a \boxtimes b} = 0, \\
& \metric{a \boxtimes b}{a \boxtimes b} = -\metric{a}{a}\metric{b}{b}+ \metric{a}{b}^2 \nonumber
\end{align}
for all $a,b\in\R^3_1$. 

The hyperbolic plane of curvature $c<0$ can be defined as the following subset of $\R^3_1$: 
\begin{equation*}
\H^2(c) = \{(x_1,x_2,x_3) \in \R^3_1 \ | -x_1^2 + x_2^2 + x_3^3 = 1/c \mbox{ and } x_1>0 \},
\end{equation*}
where $c$ is a negative real constant. The induced metric from $\metric{\cdot\,}{\cdot}$ turns $\H^2(c)$ into a complete simply connected Riemannian surface of constant Gaussian curvature $c$. The standard complex structure and its associated K\"ahler two-form on $\H^2(c)$ are defined by 
\begin{equation} \label{eq:J_omega_H2}
\begin{aligned}
& (J_{\H^2(c)})_x v = \sqrt{-c} \, x \boxtimes v, \\
& (\omega_{\H^2(c)})_x (v,w) = \metric{(J_{\H^2(c)})_x v}{w}
\end{aligned}
\end{equation}
for all $x \in \H^2(c)$ and all $v,w \in T_x\H^2(c)$.

%

With this model for $\H^2(c)$, the product manifold $\H^2(c)\times\H^2(c)$ naturally becomes a subset of $\R_1^3 \times \R_1^3 \cong \R_2^6$. The induced metric on $\H^2(c)\times\H^2(c)$ is the Riemannian product of the above defined metric on each factor, which we will also denote by $\langle \cdot \, , \cdot \rangle$. We define a complex structure on $\H^2(c)\times\H^2(c)$ by $J =(J_{\H^2(c)}, -J_{\H^2(c)})$, that is, 
\begin{equation} \label{eq:defJ}
J_{(x_1,x_2)} (v_1,v_2) = ((J_{\H^2(c)})_{x_1} v_1, -(J_{\H^2(c)})_{x_2} v_2) =
\sqrt{-c}\left(x_1 \boxtimes v_1, -x_2 \boxtimes v_2
\right)
\end{equation}  
for all $(x_1,x_2) \in \H^2(c)\times\H^2(c)$ and all $(v_1, v_2)\in T_{(x_1,x_2)}(\H^2(c)\times\H^2(c)) \cong T_{x_1}\H^2(c) \oplus T_{x_2}\H^2(c)$. The associated K\"ahler two-form $\omega$ of $\H^2(c)\times\H^2(c)$ is given by 
\begin{equation}
\omega = \langle J \cdot \, , \cdot \rangle = \pi_1^* \omega_{\H^2(c)} - \pi_2^* \omega_{\H^2(c)}, 
\end{equation}
where $\pi_1: \H^2(c)\times\H^2(c) \to \H^2(c): (x_1,x_2) \mapsto x_1$ and $\pi_2: \H^2(c)\times\H^2(c) \to \H^2(c): (x_1,x_2) \mapsto x_2$ are the projections from $\H^2(c)\times\H^2(c)$ to its two factors. The isometry group of $\H^2(c)\times\H^2(c)$ is
$$ \mathrm{Iso}(\H^2(c)\times\H^2(c)) = \left\{ \left. \left( \begin{array}{cc}A_1&0\\ 0&A_2\end{array} \right) , \left( \begin{array}{cc}0&B_1\\ B_2&0\end{array} \right) \ \right| \ A_1,A_2,B_1,B_2 \in \mathrm{O}^+(1,2)\right\}, $$
where $\mathrm{O}^+(1,2)$ denotes the ortochronous Lorentz group. The holomorphic isometries of $\H^2(c)\times\H^2(c)$, i.e., the elements of $\mathrm{Iso}(\H^2(c)\times\H^2(c))$ which preserve the complex structure $J$, are those elements for which $\det A_1 = \det A_2 = 1$ or $\det B_1 = \det B_2 = -1$. The anti-holomorphic isometries of $\H^2(c)\times\H^2(c)$, i.e., the elements of $\mathrm{Iso}(\H^2(c)\times\H^2(c))$ which take $J$ to $-J$, are those elements for which $\det A_1 = \det A_2 = -1$ or $\det B_1 = \det B_2 = 1$. In particular, there are isometries of $\H^2(c)\times\H^2(c)$ that are neither holomorphic or anti-holomorphic.

Finally, we remark that the curvature tensor of $\H^2(c)\times\H^2(c)$ is given by
\begin{equation} \label{eq:curvature_tensor_HH}
\begin{aligned}
\langle R^{\H^2(c)\times\H^2(c)}(X,Y)Z,W \rangle = 
& \ c(\langle \pi_{1\ast}X,\pi_{1\ast}Z \rangle \langle \pi_{1\ast}Y,\pi_{1\ast}W \rangle - \langle \pi_{1\ast}Y,\pi_{1\ast}Z \rangle \langle \pi_{1\ast}X,\pi_{1\ast}W \rangle \\
& \ + \langle \pi_{2\ast}X,\pi_{2\ast}Z \rangle \langle \pi_{2\ast}Y,\pi_{2\ast}W \rangle - \langle \pi_{2\ast}Y,\pi_{2\ast}Z \rangle \langle \pi_{2\ast}X,\pi_{2\ast}W \rangle)
\end{aligned}
\end{equation}
for all vector fields $X$, $Y$, $Z$ and $W$ tangent to $\H^2(c)\times\H^2(c)$.

\subsection{$\H^2(-4)\times\H^2(-4)$ as the complex hyperbolic quadric $Q^{2\ast}$} 
\label{subsec:Q*2}

The complex hyperbolic quadric $Q^{n\ast}$, see for example \cite{BS} and \cite{VdVW}, is a homogeneous K\"ahler-Einstein manifold, which exists for any complex dimension $n$. In this subsection, we will briefly introduce it for $n=2$ and show that, in this case, it is holomorphically isometric to $\mathbb H^2(-4) \times \H^2(-4)$. 

One way to introduce $Q^{2\ast}$ is as a connected component of the Grassmannian of negative definite, oriented two-planes in the pseudo-Euclidean space $\R^4_2$. It is clear that this Grassmannian can be identified with the symmetric space $\mathrm{SO}(2,2) / (\mathrm{SO}(2) \times \mathrm{SO}(2))$ (see also the remark below). The Lie group $\mathrm{SO}(2,2)$ has two connected components and one defines
\begin{equation} \label{eq:Q2*1}
Q^{2\ast} = \frac{\mathrm{SO}(2,2)^0}{\mathrm{SO}(2) \times \mathrm{SO}(2)},
\end{equation}
where the superscript $0$ denotes the connected component of the identity matrix. 

\begin{remark}
Recall that 
$$ \mathrm{SO}(2,2) = \left\{ A \in \R^{4 \times 4} \ \left| \ A \left( \! \begin{array}{cccc} -1&0&0&0 \\ 0&-1&0&0 \\ 0&0&1&0 \\ 0&0&0&1 \end{array} \! \right) A^T = \left( \! \begin{array}{cccc} -1&0&0&0 \\ 0&-1&0&0 \\ 0&0&1&0 \\ 0&0&0&1 \end{array} \! \right), \ \det A = 1 \right. \right\}, $$ 
where $A^T$ denotes the transpose of $A$. The columns of an element of $\mathrm{SO}(2,2)$ form a pseudo-orthonormal basis for $\R^4_2$. More precisely, if the columns from left to right are labelled $u_1$, $u_2$, $u_3$ and $u_4$, then these vectors are orthogonal and satisfy $\langle u_1,u_1 \rangle = \langle u_2,u_2 \rangle = -1$ and $\langle u_3,u_3 \rangle = \langle u_4,u_4 \rangle = 1$. The above mentioned identification between the Grassmannian of negative definite, oriented two-planes in $\R^4_2$ and $\mathrm{SO}(2,2) / (\mathrm{SO}(2) \times \mathrm{SO}(2))$ is then obtained by taking a positively oriented pseudo-orthonormal basis $(u_1,u_2)$ in such a two-plane and an ordered pseudo-orthonormal basis $(u_3,u_4)$ in its orthogonal complement, such that $(u_1,u_2,u_3,u_4)$ is positively oriented with respect to the standard orientation of $\R^4_2$. The matrix with columns $u_1$, $u_2$, $u_3$ and $u_4$ belongs to $\mathrm{SO}(2,2)$ and since both $(u_1,u_2)$ and $(u_3,u_4)$ are only determined up to a rotation, we indeed obtain de desired identification. 

If we write an element $A \in \mathrm{SO}(2,2)$ as a block matrix 
$$ A = \left( \begin{array}{cc} A_{11} & A_{12} \\ A_{21} & A_{22} \end{array} \right),  $$
with $A_{11}, A_{12}, A_{21}, A_{22} \in \R^{2 \times 2}$, then $\det A_{11} = \det A_{22}$ and $\det A_{12} = \det A_{21}$. Moreover, the expression $\det A_{11}-\det A_{21}$ is never zero and hence its sign distinguishes the two connected components of $\mathrm{SO}(2,2)$. The component $\mathrm{SO}(2,2)^0$ is characterized by $\det A_{11}-\det A_{21}>0$.
\end{remark}

A second description of $Q^{2\ast}$, which is more suited to introduce the Riemannian metric and the complex structure, is as a complex hypersurface of a complex anti-de Sitter space. Define $\mathbb{C}H^3_1$ as the set of all complex 1-dimensional subspaces of $\mathbb{C}^4_2 \cong \R^8_4$ on which the induced metric is negative definite. If we denote the complex span of a vector $z \in \mathbb C^4_2$ by $[z]$, the map $\pi$ from $\mathbb H^7_3(-1)= \{z \in \mathbb{C}^4_2 \cong \R^8_4 \ | \ \langle z,z \rangle = -1\}$ to $\mathbb{C}H^3_1$, defined by
$$ \pi: \mathbb H^7_3(-1) \to \mathbb{C}H^3_1: z \mapsto [z], $$
is called the Hopf fibration. The pseudo-Riemannian metric $g$ on $\mathbb{C}H^3_1$ is such that $\pi$ becomes a pseudo-Riemannian submersion and the complex structure $J$ is induced by multiplying by~$i$, in the sense that $J(d\pi)v = (d\pi)(iv)$ for all horizontal vectors $v \in T\mathbb H^7_3(-1)$. The complex hyperbolic quadric is then given by
\begin{equation} \label{eq:Q2*2}
Q^{2\ast} = \{[(z_1,z_2,z_3,z_4)]\in\mathbb{C}H^3_1 \ | \ -z_1^2 - z_2^2 +z_3^2 + z_4^2=0 \}^0,
\end{equation}
where the superscript $0$ again denotes a connected component. The metric induced by $g$ is Riemannian and the almost complex structure induced by $J$ makes $Q^{\ast 2}$ a K\"ahler-Einstein manifold. From now on, we assume that $Q^{2\ast}$ is equipped with these induced structures, which we again denote by $g$ and $J$ respectively.

\begin{remark}
The equivalence between \eqref{eq:Q2*1} and \eqref{eq:Q2*2} follows from identifying a negative definite, oriented two-plane with positively oriented pseudo-orthonormal basis $(u_1,u_2)$ in $\R^4_2$ with the point $[u_1+iu_2]$ of $\mathbb C H^3_1$. Note that the connected component of the equivalence class of the identity matrix in \eqref{eq:Q2*1} corresponds to the connected component of $[(1,i,0,0)]$ in \eqref{eq:Q2*2}.
\end{remark}

To construct an isometry between $Q^{2\ast}$ and $\mathbb H^2(-4) \times \mathbb H^2(-4)$, consider the following space of two-vectors: $\bigwedge^2 \R^4_2 = \{v\wedge w \ | \ v,w \in \R^4_2\} $. Equipped with the metric
\begin{equation}\label{eq:metric_2vectors}
\langle\!\langle v_1 \wedge w_1 , v_2 \wedge w_2 \rangle\!\rangle = - \langle v_1,v_2 \rangle \langle w_1,w_2 \rangle + \langle v_1,w_2 \rangle \langle v_2,w_1 \rangle, 
\end{equation}
we obtain that $\bigwedge^2 \R^4_2$ is isometric to $\R^6_2$. Note that the sign of \eqref{eq:metric_2vectors} is different from the usual convention in order to obtain a space containing $\mathbb H^2(-4) \times \mathbb H^2(-4)$ without further identifications. If $u=(u_1,u_2,u_3,u_4)$ is a positively oriented pseudo-orthonormal basis of $\R^4_2$, with $\langle u_1,u_1 \rangle = \langle u_2,u_2 \rangle = -1$ and $\langle u_3,u_3 \rangle = \langle u_4,u_4 \rangle = 1$, the Hodge star operator on $\bigwedge^2 \R^4_2$ is completely determined by $\ast(u_1 \wedge u_2) = u_4 \wedge u_3$, $\ast(u_1 \wedge u_3) = u_4 \wedge u_2$, $\ast(u_1 \wedge u_4) = u_2 \wedge u_3$ and $\ast^2=\mathrm{id}$.
If we define
\begin{align*}
& E_{1\pm}(u) = \frac{1}{\sqrt{2}}(u_1\wedge u_2 \pm u_4\wedge u_3), \\
& E_{2\pm}(u) = \frac{1}{\sqrt{2}}(u_1\wedge u_3 \pm u_4\wedge u_2), \\
& E_{3\pm}(u) = \frac{1}{\sqrt{2}}(u_1\wedge u_4 \pm u_2\wedge u_3),
\end{align*}
then $\{E_{1+}(u),E_{2+}(u),E_{3+}(u)\}$ is a pseudo-orthonormal basis for the space of self-dual two-vectors $\bigwedge^2_+ \R^4_2$, whereas $\{E_{1-}(u),E_{2-}(u),E_{3-}(u)\}$ is a pseudo-orthonormal basis for the space of anti-self-dual two-vectors $\bigwedge^2_- \R^4_2$. Note that the splitting $\bigwedge^2 \R^4_2 = (\bigwedge^2_+ \R^4_2) + (\bigwedge^2_- \R^4_2$) is orthogonal and that both subspaces are isometric to $\R^3_1$, since $\langle\!\langle E_{1\pm}(u),E_{1\pm}(u) \rangle\!\rangle=-1$ and $\langle\!\langle E_{2\pm}(u),E_{2\pm}(u) \rangle\!\rangle = \langle\!\langle E_{3\pm}(u),E_{3\pm}(u) \rangle\!\rangle = 1$. In particular, both spaces contain hyperbolic planes. For example, if $e=(e_1,e_2,e_3,e_4)$ is the standard basis of $\R^4_2$, then 
$$ \mathbb H^2_{\pm}(c) = \left\{ \left. \mbox{$x_1 E_{1\pm}(e) + x_2 E_{2\pm}(e) + x_3 E_{3\pm}(e) \in \bigwedge^2_{\pm} \R^4_2$} \ \right| \, -x_1^2+x_2^2+x_3^2 = 1/c \mbox{ and } x_1 > 0 \right\} $$
are complex hyperbolic planes of curvature $c<0$. We will omit the subscripts in the notation, so, for the rest of this section, $\mathbb H^2(c) \subseteq \bigwedge^2_+ \R^4_2$ and $\mathbb H^2(c) \subseteq \bigwedge^2_- \R^4_2$ will denote the above subsets.

We can now explicitly give the holomorphic isometry between $Q^{2\ast}$ and $\mathbb H^2(-4) \times \mathbb H^2(-4)$.
\begin{proposition}
For a negative definite oriented two-plane $P$ in $\R^4_2$, we denote by $u_P=(u_1,u_2,u_3,u_4)$ a positively oriented pseudo-orthonormal basis of $\R^4_2$ such that $(u_1,u_2)$ is a positively oriented basis for $P$. Then
$$ \phi: Q^{\ast 2} \to \mbox{$(\bigwedge^2_+ \R^4_2) \oplus (\bigwedge^2_- \R^4_2)$} : P \mapsto \left( \frac 12 E_{1+}(u_P), \frac 12 E_{1-}(u_P) \right) $$
is a well-defined holomorphic isometry between $Q^{\ast 2}$ and $\mathbb H^2(-4) \times \mathbb H^2(-4)$.
\end{proposition}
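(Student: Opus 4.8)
The plan is to verify, in order, that (i) $\phi$ is well defined, i.e. independent of the chosen completion $u_P$; (ii) its image lies in $\mathbb{H}^2(-4)\times\mathbb{H}^2(-4)$; (iii) $\phi$ is an isometry; and (iv) $\phi$ is holomorphic. The conceptual heart of (iii) and (iv) is to exploit the action of $G=\mathrm{SO}(2,2)^0$ on both sides, which reduces the two infinitesimal verifications to a single base point.

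For (i), note that once $P$ is fixed, the positively oriented pseudo-orthonormal basis $(u_1,u_2)$ of $P$ is determined up to an $\mathrm{SO}(2)$ rotation and, likewise, the basis $(u_3,u_4)$ of $P^\perp$ up to an $\mathrm{SO}(2)$ rotation. A one-line computation gives $u_1'\wedge u_2'=u_1\wedge u_2$ under a rotation of $(u_1,u_2)$ and $u_4'\wedge u_3'=u_4\wedge u_3$ under a rotation of $(u_3,u_4)$, so $E_{1\pm}(u_P)$, and hence $\phi(P)$, does not depend on the choice of $u_P$. For (ii), the recalled identities $\langle\!\langle E_{1\pm}(u),E_{1\pm}(u)\rangle\!\rangle=-1$ give $\langle\!\langle\tfrac12 E_{1\pm}(u_P),\tfrac12 E_{1\pm}(u_P)\rangle\!\rangle=-\tfrac14=1/(-4)$, so $\tfrac12 E_{1\pm}(u_P)$ sits on the quadric $-x_1^2+x_2^2+x_3^2=1/(-4)$ inside $\bigwedge^2_\pm\R^4_2\cong\R^3_1$. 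Writing $\tfrac12 E_{1+}(u_P)=x_1 E_{1+}(e)+x_2 E_{2+}(e)+x_3 E_{3+}(e)$, this norm relation forces $x_1^2=\tfrac14+x_2^2+x_3^2>0$, so $x_1$ never vanishes; since $Q^{2\ast}$ is connected and $x_1=\tfrac12>0$ at the base plane $P_0=\mathrm{span}(e_1,e_2)$, where $u_{P_0}=e$, we get $x_1>0$ throughout, placing the image on the correct sheet $\mathbb{H}^2(-4)$, and the same argument applies to the $E_{1-}$ component.

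For (iii) and (iv), let $G=\mathrm{SO}(2,2)^0$ act on $Q^{2\ast}$ by $A\cdot P=A(P)$ and on $\bigwedge^2\R^4_2$ by $A(v\wedge w)=Av\wedge Aw$. As $G$ preserves both $\langle\!\langle\cdot,\cdot\rangle\!\rangle$ and the orientation of $\R^4_2$, it commutes with the Hodge star, hence preserves the splitting $\bigwedge^2_+\oplus\bigwedge^2_-$ and acts on each summand $\cong\R^3_1$ through $\mathrm{O}^+(1,2)$; connectedness of $G$ puts the image in $\mathrm{SO}^+(1,2)$, so $G$ acts on $\mathbb{H}^2(-4)\times\mathbb{H}^2(-4)$ by product isometries. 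Since a basis adapted to $P$ is carried by $A$ to one adapted to $A(P)$, we obtain $E_{1\pm}(u_{A\cdot P})=A\,E_{1\pm}(u_P)$, i.e. $\phi$ is $G$-equivariant. Because $G$ acts transitively on $Q^{2\ast}$ (definition \eqref{eq:Q2*1}) by isometries and by holomorphic transformations, it now suffices to check at $P_0$ that $\mathrm d\phi_{P_0}$ is a complex-linear isometry. This base-point verification is the main obstacle: one represents $T_{P_0}Q^{2\ast}\cong\mathrm{Hom}(P_0,P_0^\perp)$ by curves $P_t=\mathrm{span}(e_1+t\xi(e_1),\,e_2+t\xi(e_2))$, differentiates $E_{1\pm}(u_{P_t})$ at $t=0$ (the normalizations of $u_1(t),\dots,u_4(t)$ contribute only at second order), and compares $\langle\!\langle\mathrm d\phi(\xi),\mathrm d\phi(\eta)\rangle\!\rangle$ with the quadric metric, as well as $\mathrm d\phi(J\xi)$ with $(J_{\mathbb{H}^2(-4)},-J_{\mathbb{H}^2(-4)})\,\mathrm d\phi(\xi)$. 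The crucial point is that, under $\mathrm d\phi$, the complex structure of $Q^{2\ast}$ induced from $\mathbb{C}H^3_1$ corresponds to $+J_{\mathbb{H}^2(-4)}$ on the self-dual summand and to $-J_{\mathbb{H}^2(-4)}$ on the anti-self-dual summand, which matches exactly the sign convention in $J=(J_{\mathbb{H}^2(-4)},-J_{\mathbb{H}^2(-4)})$; this is what forces the minus sign in the definition of $J$ and makes $\phi$ holomorphic rather than merely an isometry.

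Finally, the infinitesimal check shows $\mathrm d\phi_{P_0}$, and hence $\mathrm d\phi$ at every point by equivariance, is a linear isomorphism, so $\phi$ is an isometric immersion between connected manifolds of equal dimension. As $Q^{2\ast}$ is complete, being homogeneous, $\phi$ is a surjective Riemannian covering onto $\mathbb{H}^2(-4)\times\mathbb{H}^2(-4)$; since the latter is simply connected, $\phi$ is a diffeomorphism, and therefore the desired holomorphic isometry.
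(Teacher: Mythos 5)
Your architecture is sound and genuinely different from the paper's in two places: where the paper proves bijectivity by writing down an explicit hyperbolic-trigonometric normal form for the adapted basis $u_P$ and reading off that every point of $\mathbb{H}^2(-4)\times\mathbb{H}^2(-4)$ is hit exactly once, you instead invoke $\mathrm{SO}(2,2)^0$-equivariance to reduce all infinitesimal checks to one base point, and you obtain bijectivity from the standard covering argument (a local isometry from a complete --- here homogeneous --- manifold to a connected, simply connected manifold is a diffeomorphism). Your well-definedness argument coincides with the paper's, and your sheet argument (connectedness plus $x_1\neq 0$) is a fine replacement for the paper's explicit formulas. If the base-point verification were actually done, this would be a clean, arguably more conceptual proof.

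But it is not done, and that is a genuine gap --- at exactly the step you yourself call ``the main obstacle.'' You describe the computation (represent tangent vectors by curves $P_t$, differentiate $E_{1\pm}(u_{P_t})$ at $t=0$, compare inner products and complex structures) and then simply assert the crucial claim: that under $d\phi$ the complex structure of $Q^{2\ast}$ induced from $\mathbb{C}H^3_1$ acts as $+J_{\mathbb{H}^2(-4)}$ on the self-dual summand and as $-J_{\mathbb{H}^2(-4)}$ on the anti-self-dual summand. That claim \emph{is} the content of the proposition; it is what the paper establishes with the explicit formulas \eqref{eq:dphi}, obtained by pushing the orthonormal frame $(d\pi)(u_3),(d\pi)(u_4),(d\pi)(iu_3),(d\pi)(iu_4)$ through $d\phi$. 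Two things can only be seen by carrying this out: the metric normalization (the factor $\frac12$ in $\phi$ is what makes the image have curvature $-4$, i.e.\ makes $d\phi$ an isometry rather than a homothety), and the sign pattern, which decides whether $\phi$ is holomorphic for $J=(J_{\mathbb{H}^2},-J_{\mathbb{H}^2})$ of \eqref{eq:defJ} or for $J'=(J_{\mathbb{H}^2},J_{\mathbb{H}^2})$; it cannot be holomorphic for both, so an argument that never computes the sign cannot distinguish the true statement from a false one. Note also that your identification $T_{P_0}Q^{2\ast}\cong\mathrm{Hom}(P_0,P_0^{\perp})$ carries no a priori metric or complex structure: both must be transported through the Hopf fibration model \eqref{eq:Q2*2}, where $J(d\pi)v=(d\pi)(iv)$ and the metric is the submersion metric, before any comparison makes sense --- and that transport is precisely where the factors and signs arise. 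Until this computation (or the paper's equivalent \eqref{eq:dphi}) is actually performed, the proposition has not been proved.
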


\begin{proof} 
For a given $P \in Q^{2\ast}$, the basis $u_P$ is determined up to rotations of $(u_1,u_2)$ and $(u_3,u_4)$. Since such rotations do not change $E_{1+}(u_P)$ and $E_{1-}(u_P)$, the map $\phi$ is well-defined.

We now show that $\phi$ is a bijection between $Q^{\ast 2}$ and $\mathbb H^2(-4) \times \mathbb H^2(-4)$. Let $e=(e_1,e_2,e_3,e_4)$ be the standard basis of $\R^4_2$ and let $u_P$ be a basis associated to some $P \in Q^{2\ast}$. After a suitable rotation of $(u_1,u_2)$, we may assume that the projections of $u_1$ and $u_2$ to $\mathrm{span}\{e_1,e_2\}$ and to $\mathrm{span}\{e_3,e_4\}$ are orthogonal. Similarly, after a suitable rotation of $(u_3,u_4)$, we may assume that the projections of $u_3$ and $u_4$ to $\mathrm{span}\{e_1,e_2\}$ and to $\mathrm{span}\{e_3,e_4\}$ are orthogonal. More specifically, for every $P \in Q^{2\ast}$ there exists a basis $u_P$ of the form
\begin{align*}
& u_1 = \cosh A (\cos\alpha \, e_1 + \sin\alpha \, e_2) + \sinh A (\cos\beta \, e_3 + \sin\beta \, e_4), \\
& u_2 = \cosh B (-\sin\alpha \, e_1 + \cos\alpha \, e_2) + \sinh B (-\sin\beta \, e_3 + \cos\beta \, e_4), \\
& u_3 = \sinh A (\cos\alpha \, e_1 + \sin\alpha \, e_2) + \cosh A (\cos\beta \, e_3 + \sin\beta \, e_4), \\
& u_4 = \sinh B (-\sin\alpha \, e_1 + \cos\alpha \, e_2) + \cosh B (-\sin\beta \, e_3 + \cos\beta \, e_4).
\end{align*}
The numbers $A$ and $B$ are uniquely determined, whereas the angles $\alpha$ and $\beta$ are determined up to an integer multiple of $\pi$. A straightforward computation yields
\begin{align*}
& E_{1+}(u_P) = \cosh(A-B)E_{1+}(e) + \sinh(A-B)(\sin(\alpha+\beta)E_{2+}(e)-\cos(\alpha+\beta)E_{3+}(e)), \\
& E_{1-}(u_P) = \cosh(A+B)E_{1-}(e) + \sinh(A+B)(\sin(\alpha-\beta)E_{2-}(e)+\cos(\alpha-\beta)E_{3+}(e)),
\end{align*}
from which it is clear that $\phi$ will attain every point of $\mathbb H^2(-4) \times \mathbb H^2(-4)$ exactly once. From the formulas, we see that $\phi$ is not only a bijection between $Q^{\ast 2}$ and $\mathbb H^2(-4) \times \mathbb H^2(-4)$, but even a diffeomorphism.

To see that $\phi$ preserves the metric and the complex structure, we use the model \eqref{eq:Q2*2} for $Q^{2\ast}$. In this model, a negative definite oriented two-plane $P \subseteq \R^4_2$, with $u_P=(u_1,u_2,u_3,u_4)$, corresponds to the point $[u_1+iu_2]$. Moreover, the tangent space to $Q^{2\ast}$ at $[u_1+iu_2]$ is spanned by $(d\pi)_{\frac{1}{\sqrt 2}(u_1+iu_2)}(u_3)$, $(d\pi)_{\frac{1}{\sqrt 2}(u_1+iu_2)}(u_4)$, $(d\pi)_{\frac{1}{\sqrt 2}(u_1+iu_2)}(iu_3)$ and $(d\pi)_{\frac{1}{\sqrt 2}(u_1+iu_2)}(iu_4)$. Note that these vectors are orthonormal, since $\pi$ is a Riemannian submersion and $u_3$, $u_4$, $iu_3$ and $iu_4$ are horizontal orthonormal tangent vectors to $\H^7_3(-1) \subseteq \C^4_2 \cong \R^8_4$. A straightforward computation shows
\begin{equation} \label{eq:dphi}
\begin{aligned}
&(d\phi)_{[u_1+iu_2]}(d\pi)_{\frac{1}{\sqrt 2}(u_1+iu_2)}(u_3) = \frac{1}{\sqrt 2}(-E_{3+}(u_P),E_{3-}(u_P)),\\
&(d\phi)_{[u_1+iu_2]}(d\pi)_{\frac{1}{\sqrt 2}(u_1+iu_2)}(u_4) = \frac{1}{\sqrt 2}(E_{2+}(u_P),-E_{2-}(u_P)),\\
&(d\phi)_{[u_1+iu_2]}(d\pi)_{\frac{1}{\sqrt 2}(u_1+iu_2)}(iu_3) = \frac{1}{\sqrt 2}(E_{2+}(u_P),E_{2-}(u_P)),\\
&(d\phi)_{[u_1+iu_2]}(d\pi)_{\frac{1}{\sqrt 2}(u_1+iu_2)}(iu_4) = \frac{1}{\sqrt 2}(E_{3+}(u_P),E_{3-}(u_P)).
\end{aligned}
\end{equation}
Since the vectors on the right hand sides of \eqref{eq:dphi} are orthonormal, we conclude that $\phi$ is indeed an isometry between $Q^{2\ast}$ and $\mathbb H^2(-4) \times \mathbb H^2(-4)$. Finally, to show that $\phi$ preserves the complex structure, we see from \eqref{eq:dphi} that it suffices to verify $J(-E_{3+}(u_P),E_{3-}(u))=(E_{2+}(u_P),E_{2-}(u_P))$ and $J(E_{2+}(u_P),-E_{2-}(u_P))=(E_{3+}(u_P),E_{3-}(u_P))$, where $J$ is defined by \eqref{eq:defJ}. This follows from the observation that the basis $(E_{1\pm}(u_P),E_{2\pm}(u_P),E_{3\pm}(u_P))$ behaves as the standard basis $((1,0,0),(0,1,0),(0,0,1))$ with respect to $\boxtimes$.
\end{proof}

\section{Lagrangian surfaces in $\HH$}

From now on, we will abbreviate $\H^2(-1)$ and $\H^2(-1)\times\H^2(-1)$ as $\H^2$ and $\HH$ respectively. 

A two-plane $P$ in the tangent space of a four-dimensional manifold, equipped with a complex structure $J$, is said to be \textit{Lagrangian} with respect to $J$ if $J$ maps $P$ to its orthogonal complement. Equivalently, the two-plane $P$ is Lagrangian if $\omega|_P=0$, where $\omega$ is the K\"ahler two-form associated to $J$. The following lemma can be proven by a straightforward computation.

\begin{lemma} \label{lem:Lagrangian_plane}
Let $P \subseteq T_{(x_1,x_2)}(\HH)$ be a two-plane and assume that $\{(u_1,u_2),(v_1,v_2)\}$ is an orthonormal basis for $P$. Then, the following assertions are equivalent:
\begin{itemize}
\item[(a)] $P$ is Lagrangian with respect to $J=(J_{\H^2},-J_{\H^2})$ or $J'=(J_{\H^2},J_{\H^2})$;
\item[(b)] $\|u_1\| = \|v_2\|$ and $\|u_2\| = \|v_1\|$;
\item[(c)] $\|u_1\|^2 + \|v_1\|^2 = \|u_2\|^2 + \|v_2\|^2 = 1$.
\end{itemize}
\end{lemma}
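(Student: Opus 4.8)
The plan is to translate each of the three conditions into purely algebraic statements about the squared norms and the mixed inner products of the four component vectors $u_1,v_1\in T_{x_1}\H^2$ and $u_2,v_2\in T_{x_2}\H^2$, and then to check that, once orthonormality of $\{(u_1,u_2),(v_1,v_2)\}$ is imposed, all three conditions collapse to the same equation.

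First I would record orthonormality componentwise. Since the metric on $\HH$ is a Riemannian product, the pair $\{(u_1,u_2),(v_1,v_2)\}$ being orthonormal is equivalent to the three scalar relations
\[ \|u_1\|^2 + \|u_2\|^2 = 1, \qquad \|v_1\|^2 + \|v_2\|^2 = 1, \qquad \metric{u_1}{v_1} + \metric{u_2}{v_2} = 0. \]
The third relation, which I will use crucially, says that the two mixed inner products are opposite and hence have equal squares.

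Next I would rewrite (a) using $\omega=\metric{J\cdot}{\cdot}$ and the definition \eqref{eq:defJ}. For the given orthonormal pair, the Lagrangian condition for $J=(J_{\H^2},-J_{\H^2})$ reads $(\omega_{\H^2})_{x_1}(u_1,v_1)-(\omega_{\H^2})_{x_2}(u_2,v_2)=0$, while for $J'=(J_{\H^2},J_{\H^2})$ it reads $(\omega_{\H^2})_{x_1}(u_1,v_1)+(\omega_{\H^2})_{x_2}(u_2,v_2)=0$. Thus (a) holds exactly when the two factor areas agree up to sign, that is, when $(\omega_{\H^2})_{x_1}(u_1,v_1)^2=(\omega_{\H^2})_{x_2}(u_2,v_2)^2$. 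The elementary input here is the two-dimensional identity $(\omega_{\H^2})_x(a,b)^2=\|a\|^2\|b\|^2-\metric{a}{b}^2$, valid for all $a,b\in T_x\H^2$ because $J_{\H^2}$ is rotation by a right angle on the oriented plane $T_x\H^2$. Substituting this into both sides and cancelling the equal squares $\metric{u_1}{v_1}^2=\metric{u_2}{v_2}^2$ furnished by the third orthonormality relation, condition (a) reduces to the single equation $\|u_1\|^2\|v_1\|^2=\|u_2\|^2\|v_2\|^2$.

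Finally I would close the loop with a short computation. Writing $a=\|u_1\|^2$, $b=\|u_2\|^2$, $r=\|v_1\|^2$, $s=\|v_2\|^2$, orthonormality gives $b=1-a$ and $s=1-r$; substituting into $ar=bs$ simplifies to $a+r=1$, and then $b+s=2-(a+r)=1$, which is exactly (c), while reading $a+r=1$ together with $a+b=1$ and $r+s=1$ yields $a=s$ and $b=r$, which is (b). Conversely both (b) and (c) plainly imply $ar=bs$. I expect the only genuine subtlety to lie in the reduction of (a): one must exploit the ``or'' in (a) to pass to the squared areas, and must invoke the third orthonormality relation precisely to cancel the inner-product terms, so that the a priori different Lagrangian conditions for $J$ and $J'$ both reduce to the same statement about norms alone. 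Everything else is routine linear algebra in a two-dimensional inner product space together with the substitution above.
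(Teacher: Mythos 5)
Your proof is correct and complete. The paper itself omits the proof of this lemma (it only remarks that it ``can be proven by a straightforward computation''), and your argument is exactly such a computation, carried out correctly: you reduce the disjunction in (a) to the single equation $\|u_1\|^2\|v_1\|^2=\|u_2\|^2\|v_2\|^2$ via the Gram-determinant identity $\omega_{\H^2}(a,b)^2=\|a\|^2\|b\|^2-\langle a,b\rangle^2$ and the cancellation $\langle u_1,v_1\rangle=-\langle u_2,v_2\rangle$ coming from orthonormality, and then the elementary substitution $b=1-a$, $s=1-r$ closes the equivalence with (b) and (c).
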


In the rest of the paper, we will interpret ``Lagrangian in $\HH$'' as ``Lagrangian with respect to $J=(J_{\H^2},-J_{\H^2})$''. An immersion $\Phi: \Sigma \to \HH$ of a surface $\Sigma$ into $\HH$ is Lagrangian if all the tangent planes to the surface are Lagrangian. Denoting $\phi_1 = \pi_1 \circ \Phi$ and $\phi_2 = \pi_2 \circ \Phi$, it is easy to see that $\Phi=(\phi_1,\phi_2): \Sigma \to \HH$ is Lagrangian if and only if
\begin{equation} \label{eq:pullback_omega}
\phi_1^* \omega_{\H^2} = \phi_2^*\omega_{\H^2}.
\end{equation}
We give some important examples. 

\begin{example} \label{ex:product_of_curves}
Let $I_1,I_2 \subseteq \R$ be open intervals and let $\beta_1:I_1\to\H^2$ and $\beta_2:I_2\to\H^2$ be curves parametrized by arc length. Then
\begin{equation} \label{eq:product_of_curves}
\Phi: I_1 \times I_2 \subseteq \R^2 \to \HH: (s_1,s_2) \mapsto (\beta_1(s_1),\beta_2(s_2)) 
\end{equation}
is an isometric immersion of an open part of the Euclidean plane into $\HH$, in particular, the image of $\Phi$ is a flat surface in $\HH$. Moreover, it is easy to see that $\Phi$ is a Lagrangian immersion. Let $N_1 = J_{\H^2}\beta_1' = \beta_1 \boxtimes \beta_1'$ and $N_2 = -J_{\H^2}\beta_2' = -\beta_2 \boxtimes \beta_2'$ be unit normal vector fields along the component curves, tangent to $\H^2$, and put $e_1=(\beta_1',0)$ and $e_2=(0,\beta_2')$. Then the second fundamental form of $\Phi$ with respect to the orthonormal basis $\{e_1,e_2\}$ is given by
\begin{equation} \label{eq:sff_product}
h(e_1,e_1) = (\kappa_1N_1,0), \qquad h(e_1,e_2) = 0, \qquad h(e_2,e_2) = (0,\kappa_2N_2), 
\end{equation}
where $\kappa_1$ and $\kappa_2$ are the geodesic curvatures of $\beta_1$ and $\beta_2$ with respect to the chosen orientations on the factors of $\HH$.
\end{example}

\begin{example} \label{ex:diagonal}
Consider the map
\begin{equation} \label{eq:diagonal}
\Phi: \H^2(-1/2) \to \HH: x \mapsto \frac{1}{\sqrt 2}(x,x)
\end{equation}
from the hyperbolic plane of curvature $-1/2$ to $\HH$. It is easy to see that $\Phi$ is a Lagrangian isometric immersion. We will refer to this immersion as the \textit{diagonal}. The tangent plane to the image of $\Phi$ at a point $\Phi(x)$ is $\{(v,v) \ | \ v \in \R^3_1, \ \langle v,x \rangle = 0\} \subset T_{\Phi(x)}(\HH)$ and the normal plane to the surface at that point is $\{(v,-v) \ | \ v \in \R^3_1, \ \langle v,x \rangle = 0\} \subset T_{\Phi(x)}(\HH)$. From this, it is easy to see that $\Phi$ is totally geodesic. 
\end{example}

\begin{example} \label{ex:graph}
Consider a differentiable map $F: U \subseteq \H^2 \to \H^2$ from an open part $U$ of $\H^2$ to $\H^2$. The graph of $F$ is the immersion
\begin{equation} \label{eq:graph}
\Phi: U \subseteq \H^2 \to \HH: x \mapsto (x,F(x)).
\end{equation}
The metric one needs to put on $U$ to make $\Phi$ into an isometric immersion depends on $F$. However, it follows immediately from \eqref{eq:pullback_omega} that $\Phi$ is a Lagrangian immersion if and only if $F^{\ast}\omega_{\H^2}=\omega_{\H^2}$, i.e., if and only if $F$ is area preserving and orientation preserving.
\end{example}

\begin{example} \label{ex:Gauss_map}
Let $a: \Sigma \to \H^3_1(-1) \subseteq \R^4_2$ be a spacelike immersion of a surface in the three-dimensional anti-de Sitter space of curvature $-1$. A unit normal along the immersion will be timelike and hence we can also consider it as a map $b: \Sigma \to \H^3_1(-1) \subseteq \R^4_2$. The Gauss map
$$ \Phi: \Sigma \to Q^{2\ast}: p \mapsto [a(p)+ib(p)] $$
of $a$ is a Lagrangian immersion of $\Sigma$ into $Q^{\ast 2}$, where we have used the definition \eqref{eq:Q2*2} for $Q^{2\ast}$. By the identification explained in section \ref{subsec:Q*2}, $\Phi$ can be seen as a Lagrangian immersion into $\H^2(-4)\times\H^2(-4)$, which is homothetic to $\HH$.
\end{example}

Let $\Phi: \Sigma \to \HH$ be a Lagrangian immersion of a surface $\Sigma$ and let $\omega_{\Sigma}$ be a volume form on $\Sigma$, which is compatible with the induced metric. If $\Sigma$ is not orientable, we can only choose $\omega_{\Sigma}$ locally. Then \eqref{eq:pullback_omega} implies that there exist a (local) function $\Gamma: \Sigma \to \R$ such that
\begin{equation} \label{eq:def_Gamma}
\phi_1^* \omega_{\H^2} = \phi_2^* \omega_{\H^2} = \Gamma \omega_\Sigma.
\end{equation} 

The following two theorems show some important relations between the function $\Gamma$ and the geometry of the Lagrangian surface $\Sigma$. We first prove a lemma expressing the function $\Gamma$ in terms of an orthonormal frame on $\Sigma$.

\begin{lemma} \label{lem:expressions_Gamma}
Let $\Phi=(\phi_1,\phi_2): \Sigma \to \HH$ be a Lagrangian surface and define the function~$\Gamma$ as in \eqref{eq:def_Gamma}. Let $(e_1,e_2)$ be an orthonormal frame on $\Sigma$ such that $\omega_{\Sigma}(e_1,e_2)=1$. Then
\begin{itemize}
\item[(a)] $\Gamma = \langle \phi_{j\ast}e_1 \boxtimes \phi_{j\ast}e_2 , \phi_j \rangle$,
\item[(b)] $\phi_{j\ast}e_1 \boxtimes \phi_{j\ast}e_2 = -\Gamma \phi_j$,
\item[(c)] $\Gamma^2 = -\langle \phi_{j\ast}e_1 \boxtimes \phi_{j\ast}e_2 , \phi_{j\ast}e_1 \boxtimes \phi_{j\ast}e_2 \rangle$
\end{itemize}
for $j \in \{1,2\}$, where $\langle \cdot\,,\cdot \rangle$ is the metric on $\R^3_1$.
\end{lemma}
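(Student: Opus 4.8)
The plan is to establish (a) first, deduce (b) from it by a short linear-algebra argument inside $\R^3_1$, and then obtain (c) as an immediate consequence of (b). Throughout I use that here $c=-1$, so $\sqrt{-c}=1$ and the formulas \eqref{eq:J_omega_H2} simplify to $(J_{\H^2})_x v = x\boxtimes v$ and $\omega_{\H^2}(v,w)=\langle x\boxtimes v,w\rangle$, and that $\phi_j$ takes values in $\H^2\subseteq\R^3_1$, so $\langle\phi_j,\phi_j\rangle=-1$ and $T_{\phi_j}\H^2=\phi_j^{\perp}$.

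For (a), I would start from the defining relation \eqref{eq:def_Gamma}. Evaluating it on the frame and using $\omega_{\Sigma}(e_1,e_2)=1$ gives $\Gamma=(\phi_j^*\omega_{\H^2})(e_1,e_2)=\omega_{\H^2}(\phi_{j\ast}e_1,\phi_{j\ast}e_2)$, and the simplified second line of \eqref{eq:J_omega_H2} at the point $\phi_j$ turns this into $\Gamma=\langle\phi_j\boxtimes\phi_{j\ast}e_1,\phi_{j\ast}e_2\rangle$. It then remains only to rewrite this triple product. The key point is the cyclic invariance $\langle a\boxtimes b,c\rangle=\langle b\boxtimes c,a\rangle$ of the Lorentzian scalar triple product, which follows by a direct computation from the coordinate definition of $\boxtimes$. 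Applying it with $(a,b,c)=(\phi_j,\phi_{j\ast}e_1,\phi_{j\ast}e_2)$ immediately yields $\Gamma=\langle\phi_{j\ast}e_1\boxtimes\phi_{j\ast}e_2,\phi_j\rangle$, proving (a).

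For (b), I set $w:=\phi_{j\ast}e_1\boxtimes\phi_{j\ast}e_2$. By the second line of \eqref{eq:properties_cross_product}, $w$ is orthogonal to both $\phi_{j\ast}e_1$ and $\phi_{j\ast}e_2$, and these two vectors, being tangent to $\H^2$ at $\phi_j$, lie in $\phi_j^{\perp}$. I would then compare $w$ with the candidate $-\Gamma\phi_j$ by testing both against the triple $\{\phi_{j\ast}e_1,\phi_{j\ast}e_2,\phi_j\}$: both have vanishing inner product with $\phi_{j\ast}e_1$ and $\phi_{j\ast}e_2$, while against $\phi_j$ one gets $\langle w,\phi_j\rangle=\Gamma$ (this is exactly (a)) and $\langle-\Gamma\phi_j,\phi_j\rangle=-\Gamma\langle\phi_j,\phi_j\rangle=\Gamma$. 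When $\Gamma\neq 0$ the area form $\omega_{\H^2}$ is nonzero on $(\phi_{j\ast}e_1,\phi_{j\ast}e_2)$, so these are linearly independent and the triple is a basis of the nondegenerate space $\R^3_1$; the matching inner products then force $w=-\Gamma\phi_j$. The only degenerate case is $\Gamma=0$, where nondegeneracy of $\omega_{\H^2}$ makes $\phi_{j\ast}e_1,\phi_{j\ast}e_2$ linearly dependent, so that $w=0=-\Gamma\phi_j$; hence (b) holds at every point.

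Finally, (c) is immediate from (b): $\langle w,w\rangle=\Gamma^2\langle\phi_j,\phi_j\rangle=-\Gamma^2$, which is the claimed identity $\Gamma^2=-\langle\phi_{j\ast}e_1\boxtimes\phi_{j\ast}e_2,\phi_{j\ast}e_1\boxtimes\phi_{j\ast}e_2\rangle$. I expect the only genuine bookkeeping to be the degenerate case $\Gamma=0$ in (b) (the points where $\phi_j$ fails to be an immersion); the cyclic invariance driving (a) and the basis comparison in (b) are otherwise routine.
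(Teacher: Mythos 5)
Your proposal is correct and follows essentially the same route as the paper: part (a) via the pullback definition of $\Gamma$ and cyclic invariance of the Lorentzian triple product, part (b) by a two-case linear-algebra argument in $\R^3_1$ (the paper splits on whether $\phi_{j\ast}e_1 \boxtimes \phi_{j\ast}e_2$ vanishes and identifies it as a multiple of $\phi_j$ via orthogonality to the tangent plane, while you split on whether $\Gamma$ vanishes and compare inner products against a basis---the same idea in slightly different packaging), and part (c) immediately from (b).
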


\begin{proof}
From \eqref{eq:def_Gamma} and \eqref{eq:J_omega_H2}, we obtain $\Gamma = \Gamma \omega_{\Sigma}(e_1,e_2) = (\phi_j^{\ast}\omega_{\H^2})(e_1,e_2) = \omega_{\H^2}(\phi_{j\ast}e_1,\phi_{j\ast}e_2) = \langle J_{\H^2}\phi_{j\ast}e_1 , \phi_{j\ast}e_2 \rangle = \langle \phi_j \boxtimes \phi_{j\ast}e_1 , \phi_{j\ast}e_2 \rangle$ for $j \in \{1,2\}$. To finish the proof of (a), it suffices to remark that $\langle a \boxtimes b, c \rangle = \langle b \boxtimes c, a \rangle$ for all $a,b,c \in \R^3_1$.

To prove (b), we choose $j \in \{1,2\}$ and distinguish two cases. First, if $\phi_{j\ast}e_1 \boxtimes \phi_{j\ast}e_2 = 0$, then (a) implies that $\Gamma = 0$ and (b) reduces to $0=0$. Next, assume that $\phi_{j\ast}e_1 \boxtimes \phi_{j\ast}e_2 \neq 0$. Then $\phi_{j\ast}e_1$ and $\phi_{j\ast}e_2$ span the tangent plane to $\H^2$, which means that $\phi_{j\ast}e_1 \boxtimes \phi_{j\ast}e_2$ is a multiple of $\phi_j$. Since $\langle \phi_j,\phi_j \rangle = -1$, we have $\phi_{j\ast}e_1 \boxtimes \phi_{j\ast}e_2 = -\langle \phi_{j\ast}e_1 \boxtimes \phi_{j\ast}e_2 , \phi_j \rangle \phi_j$, which, by (a), is equivalent to (b). 

Finally, (c) is a direct consequence of (b).
\end{proof}

\begin{theorem} [Gauss equation] \label{theo:Gauss_equation}
Let $\Phi=(\phi_1,\phi_2): \Sigma \to \HH$ be a Lagrangian surface and define the function~$\Gamma$ as in \eqref{eq:def_Gamma}. Let $H$ be the mean curvature vector field of the immersion, $h$ its second fundamental form and $\|h\|^2$ the squared norm of the second fundamental form. Then the Gaussian curvature of $\Sigma$ is given by
\begin{equation} \label{eq:Gauss_equation}
K = 2\|H\|^2 - \frac{1}{2}\|h\|^2 - 2 \Gamma^2.
\end{equation}
\end{theorem}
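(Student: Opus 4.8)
The plan is to apply the Gauss equation for isometric immersions and to identify the resulting ambient curvature term with $-2\Gamma^2$ by means of Lemma \ref{lem:expressions_Gamma}. Fix an orthonormal frame $(e_1,e_2)$ on $\Sigma$ with $\omega_\Sigma(e_1,e_2)=1$. Since $\Sigma$ is a surface, its Gaussian curvature coincides with the sectional curvature of its tangent plane, and the Gauss equation expresses it as
\begin{equation*}
K = \metric{R^{\HH}(e_1,e_2)e_1}{e_2} + \metric{h(e_1,e_1)}{h(e_2,e_2)} - \|h(e_1,e_2)\|^2,
\end{equation*}
where $R^{\HH}$ is the ambient curvature tensor, with the sign convention fixed by \eqref{eq:curvature_tensor_HH}. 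The proof then splits into two independent computations: evaluating the ambient sectional curvature and rewriting the two second fundamental form terms.

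For the first computation I would substitute $X=Z=e_1$, $Y=W=e_2$ and $c=-1$ into \eqref{eq:curvature_tensor_HH}. Writing $a_{ij}=\metric{\phi_{1\ast}e_i}{\phi_{1\ast}e_j}$ and $b_{ij}=\metric{\phi_{2\ast}e_i}{\phi_{2\ast}e_j}$, the right-hand side collapses to minus the sum of the two Gram determinants,
\begin{equation*}
\metric{R^{\HH}(e_1,e_2)e_1}{e_2} = -\bigl((a_{11}a_{22}-a_{12}^2)+(b_{11}b_{22}-b_{12}^2)\bigr).
\end{equation*}
The key step is to recognize each Gram determinant as $\Gamma^2$. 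The third identity in \eqref{eq:properties_cross_product} gives $\metric{\phi_{1\ast}e_1\boxtimes\phi_{1\ast}e_2}{\phi_{1\ast}e_1\boxtimes\phi_{1\ast}e_2}=-(a_{11}a_{22}-a_{12}^2)$, and analogously for the second factor, so Lemma \ref{lem:expressions_Gamma}(c) yields $a_{11}a_{22}-a_{12}^2=b_{11}b_{22}-b_{12}^2=\Gamma^2$. Hence the ambient term equals $-2\Gamma^2$. I would emphasize that the equality of the two Gram determinants is not accidental but is exactly the content of the Lagrangian condition \eqref{eq:def_Gamma}, already packaged into Lemma \ref{lem:expressions_Gamma}.

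For the second computation I would use $H=\tfrac12\bigl(h(e_1,e_1)+h(e_2,e_2)\bigr)$ together with $\|h\|^2=\|h(e_1,e_1)\|^2+2\|h(e_1,e_2)\|^2+\|h(e_2,e_2)\|^2$; expanding $\|H\|^2$ and subtracting gives directly $2\|H\|^2-\tfrac12\|h\|^2=\metric{h(e_1,e_1)}{h(e_2,e_2)}-\|h(e_1,e_2)\|^2$. Substituting both computations into the Gauss equation produces the claimed identity, which one can sanity-check against the diagonal immersion ($h=0$, $\Gamma^2=1/4$, $K=-1/2$) and against any product of curves ($\Gamma^2=0$, $K=0$). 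The only genuine subtlety I foresee is the bookkeeping of sign conventions: the sign in \eqref{eq:curvature_tensor_HH} and the placement of indices in the Gauss equation must be kept mutually consistent. The conceptual heart of the argument, however, is the single observation that the ambient sectional curvature of a Lagrangian tangent plane is completely determined by $\Gamma$ via Lemma \ref{lem:expressions_Gamma}.
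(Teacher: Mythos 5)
Your proof is correct and follows essentially the same route as the paper: fix an orthonormal frame with $\omega_\Sigma(e_1,e_2)=1$, evaluate the ambient curvature term of the Gauss equation via \eqref{eq:curvature_tensor_HH} together with the third identity of \eqref{eq:properties_cross_product} and Lemma \ref{lem:expressions_Gamma}(c) to get $-2\Gamma^2$, and use the algebraic identity $2\|H\|^2-\tfrac{1}{2}\|h\|^2=\langle h(e_1,e_1),h(e_2,e_2)\rangle-\|h(e_1,e_2)\|^2$. The only divergence is sign bookkeeping: you contract the ambient tensor as $\langle R^{\HH}(e_1,e_2)e_1,e_2\rangle$, which is indeed the placement consistent with \eqref{eq:curvature_tensor_HH} as stated, while the paper writes $\langle R^{\HH}(\Phi_{\ast}e_1,\Phi_{\ast}e_2)\Phi_{\ast}e_2,\Phi_{\ast}e_1\rangle$ and absorbs the resulting sign in its computation; both yield the same conclusion.
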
	

\begin{proof}
Choose $p \in \Sigma$ and let $(e_1,e_2)$ be an orthonormal basis of $T_p\Sigma$ such that $\omega_{\Sigma}(e_1,e_2)=1$. From the general statement of the Gauss equation, we have
\begin{equation} \label{eq:Gauss_equation2}
K = \langle R^{\HH}(\Phi_{\ast}e_1,\Phi_{\ast}e_2)\Phi_{\ast}e_2,\Phi_{\ast}e_1 \rangle + \langle h(e_1,e_1),h(e_2,e_2) \rangle - \langle h(e_1,e_2),h(e_1,e_2) \rangle.
\end{equation}
Since $H = (1/2)(h(e_1,e_1)+h(e_2,e_2))$ and $\|h\|^2=\|h(e_1,e_1)\|^2 + 2\|h(e_1,e_2)\|^2 + \|h(e_2,e_2\|^2$, we have $2\|H\|^2 - (1/2)\|h\|^2 = \langle h(e_1,e_1),h(e_2,e_2) \rangle - \langle h(e_1,e_2),h(e_1,e_2) \rangle$. By comparing \eqref{eq:Gauss_equation} and \eqref{eq:Gauss_equation2}, it is hence sufficient to prove that $\langle R^{\HH}(\Phi_{\ast}e_1,\Phi_{\ast}e_2)\Phi_{\ast}e_2,\Phi_{\ast}e_1 \rangle = -2\Gamma^2$.

In the following computation, we use \eqref{eq:curvature_tensor_HH} in the first step, the third property in \eqref{eq:properties_cross_product} in the second step and Lemma \ref{lem:expressions_Gamma} (c) in the third step:
\begin{align*}
\langle & R^{\HH}(\Phi_{\ast}e_1,\Phi_{\ast}e_2)\Phi_{\ast}e_2,\Phi_{\ast}e_1 \rangle \\
&= \langle \phi_{1\ast}e_1,\phi_{1\ast}e_2 \rangle^2 \!-\! \langle \phi_{1\ast}e_1,\phi_{1\ast}e_1 \rangle \langle \phi_{1\ast}e_2,\phi_{1\ast}e_2 \rangle
\!+\! \langle \phi_{2\ast}e_1,\phi_{2\ast}e_2 \rangle^2 \!-\! \langle \phi_{2\ast}e_1,\phi_{2\ast}e_1 \rangle \langle \phi_{2\ast}e_2,\phi_{2\ast}e_2 \rangle \\
&= \langle \phi_{1\ast}e_1 \boxtimes \phi_{1\ast}e_2 , \phi_{1\ast}e_1 \boxtimes \phi_{1\ast}e_2 \rangle + \langle \phi_{2\ast}e_1 \boxtimes \phi_{2\ast}e_2 , \phi_{2\ast}e_1 \boxtimes \phi_{2\ast}e_2 \rangle \\
&= -2\Gamma^2.
\end{align*}
\end{proof}

\begin{theorem} \label{theo:characterization_by_Gamma}
Let $\Phi=(\phi_1,\phi_2): \Sigma \to \HH$ be a Lagrangian surface and define the function~$\Gamma$ as in \eqref{eq:def_Gamma}. Then $0 \leq \Gamma^2 \leq 1/4$ and
\begin{itemize}
\item[(a)] $\Gamma^2 \equiv 0$ if and only if the image of $\Phi$ is a product of curves, as described in Example \ref{ex:product_of_curves},
\item[(b)] $\Gamma^2 \equiv 1/4$ if and only if, up to a holomorphic or anti-holomorphic isometry of $\HH$, the image of $\Phi$ is an open part of the diagonal, as described in Example \ref{ex:diagonal}.
\end{itemize}
\end{theorem}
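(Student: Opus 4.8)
The plan is to work in an orthonormal frame and reduce everything to Lemma~\ref{lem:expressions_Gamma}(c). Fix $p\in\Sigma$ and an orthonormal basis $(e_1,e_2)$ of $T_p\Sigma$ with $\omega_\Sigma(e_1,e_2)=1$, and abbreviate $u_j=\phi_{j\ast}e_1$ and $w_j=\phi_{j\ast}e_2$ for $j\in\{1,2\}$. Since $\Phi$ is an isometric immersion, $\{(u_1,u_2),(w_1,w_2)\}$ is orthonormal, so Lemma~\ref{lem:Lagrangian_plane}(c) gives $\|u_1\|^2+\|w_1\|^2=\|u_2\|^2+\|w_2\|^2=1$, while orthogonality of $\Phi_{\ast}e_1$ and $\Phi_{\ast}e_2$ yields $\langle u_1,w_1\rangle=-\langle u_2,w_2\rangle=:p$. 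Writing $a=\|u_1\|^2$ and $b=\|w_1\|^2$, the third identity in \eqref{eq:properties_cross_product} combined with Lemma~\ref{lem:expressions_Gamma}(c) gives $\Gamma^2=ab-p^2$. As $a+b=1$ with $a,b\ge 0$, I obtain $0\le\Gamma^2\le ab\le 1/4$, where $\Gamma^2=1/4$ forces $a=b=1/2$ and $p=0$, while $\Gamma^2=0$ is equivalent, by the equality case of the Cauchy--Schwarz inequality, to linear dependence of $u_1$ and $w_1$.

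For part (a) the easy direction is immediate: if $\Phi$ is a product of curves then $\phi_1$ factors through $I_1$, so $d\phi_1$ has rank at most one and $\phi_1^{\ast}\omega_{\H^2}=0$, whence $\Gamma\equiv 0$. Conversely, suppose $\Gamma\equiv 0$. By the computation above $u_1$ and $w_1$ are everywhere linearly dependent, and since $a+b=1$ they cannot both vanish, so $d\phi_1$ has constant rank one; the same argument applied with $j=2$ shows $d\phi_2$ has constant rank one. The kernels $K_1=\ker d\phi_1$ and $K_2=\ker d\phi_2$ are therefore smooth line fields, and they are transverse because $\Phi=(\phi_1,\phi_2)$ is an immersion. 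Being one-dimensional they are automatically integrable, so around each point I can choose coordinates $(s_1,s_2)$ with $\partial_{s_1}$ spanning $K_2$ and $\partial_{s_2}$ spanning $K_1$. Then $\phi_1$ is constant along the $s_2$-curves and $\phi_2$ along the $s_1$-curves, giving $\Phi(s_1,s_2)=(\beta_1(s_1),\beta_2(s_2))$ with each $\beta_i$ regular; after reparametrizing by arc length this is exactly Example~\ref{ex:product_of_curves}.

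For part (b) the condition $\Gamma^2\equiv 1/4$ forces $a=b=1/2$ and $p=0$ at every point, that is $\phi_j^{\ast}g_{\H^2}=\tfrac12\,g_\Sigma$ for $j\in\{1,2\}$, so $\phi_1$ and $\phi_2$ are homotheties of ratio $1/\sqrt 2$. In particular each $\phi_j$ is a local diffeomorphism, $(\Sigma,g_\Sigma)$ has constant Gaussian curvature $-1/2$, and $\phi_1,\phi_2\colon(\Sigma,\tfrac12 g_\Sigma)\to\H^2$ are local isometries into the simply connected space form $\H^2$. Moreover $\Gamma$ is continuous with $\Gamma^2\equiv 1/4$, hence locally constant equal to $\pm 1/2$, so $\phi_1$ and $\phi_2$ induce the same orientation behaviour. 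By rigidity of local isometries into $\H^2$ — the value and the orientation-preserving first-order behaviour at one point determine the map — there is an isometry $F$ of $\H^2$ with $\det F=1$ and $\phi_2=F\circ\phi_1$. Applying the holomorphic ambient isometry $(\mathrm{id},F^{-1})$ reduces $\Phi$ to $(\phi_1,\phi_1)$, and identifying $(\Sigma,g_\Sigma)$ with an open part of $\H^2(-1/2)$ via the homothety $\phi_1$ exhibits this as the diagonal of Example~\ref{ex:diagonal}. If the sign of $\Gamma$ is opposite to that of the diagonal, one first composes with the anti-holomorphic isometry $(G,G)$ for an orientation-reversing $G\in\mathrm{O}^+(1,2)$, which flips the sign of $\Gamma$; this is where the anti-holomorphic case enters. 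The converse follows by evaluating directly on the diagonal, where $a=b=1/2$ and $p=0$ give $\Gamma^2=1/4$, together with the observation that $\Gamma^2$ is invariant under holomorphic and anti-holomorphic isometries, which preserve $\omega$ up to sign.

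The main obstacle is the rigidity step in part (b): converting the pointwise homothety condition into a single global ambient isometry carrying $\Phi$ onto the diagonal. This requires promoting the locally defined comparison map $\phi_2\circ\phi_1^{-1}$ to a genuine isometry of $\H^2$ and tracking orientations — through the constant sign of $\Gamma$ — in order to choose correctly between the holomorphic and anti-holomorphic normalizations. The bound and part (a), by contrast, reduce to linear algebra in $\R^3_1$ and the elementary integrability of line fields in dimension two.
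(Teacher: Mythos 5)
Your proposal is correct and follows essentially the same route as the paper: the same pointwise algebra via Lemma~\ref{lem:Lagrangian_plane} and Lemma~\ref{lem:expressions_Gamma}(c) for the bound $0\le\Gamma^2\le 1/4$, a rank-one/line-field argument for (a) (which in fact fills in details the paper leaves implicit), and rigidity of local isometries into $\H^2$ with orientation tracked through $\phi_1^{\ast}\omega_{\H^2}=\phi_2^{\ast}\omega_{\H^2}$ for (b), matching the paper's observation that $\det A_1=\det A_2$. The only (harmless) deviation is your closing remark in (b): because you normalize using $\phi_1$ itself as the identification of $\Sigma$ with an open part of $\H^2(-1/2)$, the holomorphic isometry $(\mathrm{id},F^{-1})$ already carries the image onto the diagonal, so the anti-holomorphic case you invoke to fix ``the sign of $\Gamma$'' (which is not even well defined, as it depends on the choice of $\omega_\Sigma$) is never actually needed for the image statement.
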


\begin{proof}
Let $(e_1,e_2)$ be a (local) orthonormal frame on $\Sigma$ such that $\omega_{\Sigma}(e_1,e_2)=1$. From Lemma \ref{lem:expressions_Gamma} (c) and the third property in \eqref{eq:properties_cross_product}, we obtain $\Gamma^2 = -\langle \phi_{j\ast}e_1 \boxtimes \phi_{j\ast}e_2 , \phi_{j\ast}e_1 \boxtimes \phi_{j\ast}e_2 \rangle = \langle \phi_{j\ast}e_1,\phi_{j\ast}e_1 \rangle \langle \phi_{j\ast}e_2,\phi_{j\ast}e_2 \rangle - \langle \phi_{j\ast}e_1,\phi_{j\ast}e_2 \rangle^2$ for $j \in \{1,2\}$, which we can rewrite as
$$ \Gamma^2 = \frac 14 ( \|\phi_{j\ast}e_1\|^2 \!+\! \| \phi_{j\ast}e_2\|^2 )^2 - \frac 14 ( \|\phi_{j\ast}e_1\|^2 \!-\! \| \phi_{j\ast}e_2\|^2 )^2 - \langle \phi_{j\ast}e_1,\phi_{j\ast}e_2 \rangle^2. $$
Since $\Phi$ is Lagrangian, Lemma \ref{lem:Lagrangian_plane} implies that $\|\phi_{j\ast}e_1\|^2 \!+\! \| \phi_{j\ast}e_2\|^2=1$ and hence we obtain
\begin{equation} \label{eq:inequality_Gamma}
\Gamma^2 = \frac 14 - \frac 14 ( \|\phi_{j\ast}e_1\|^2 \!-\! \| \phi_{j\ast}e_2\|^2 )^2 - \langle \phi_{j\ast}e_1,\phi_{j\ast}e_2 \rangle^2 \leq \frac 14.
\end{equation}

In order to prove statement (a), we remark that it follows from Lemma \ref{lem:expressions_Gamma} (c) that $\Gamma \equiv 0$ if and only if $\phi_{1\ast}e_1 \boxtimes \phi_{1\ast}e_2 = \phi_{2\ast}e_1 \boxtimes \phi_{2\ast}e_2 = 0$, which is equivalent to the statement that both $\phi_{1\ast}$ and $\phi_{2\ast}$ are not of full rank. Since $\Phi=(\phi_1,\phi_2)$ is an immersion, and hence of full rank, we obtain that the images of $\phi_1$ and $\phi_2$ are curves in $\H^2$. Conversely, it is easy to see from Lemma \ref{lem:expressions_Gamma} that a product of curves in $\H^2$ satisfies $\Gamma^2 \equiv 0$.

Finally, $\Gamma^2=1/4$ is equivalent to $\|\phi_{j\ast}e_1\| = \| \phi_{j\ast}e_2\| = 1/\sqrt 2$ and $\langle \phi_{j\ast}e_1,\phi_{j\ast}e_2 \rangle = 0$ by \eqref{eq:inequality_Gamma} and the fact that $\|\phi_{j\ast}e_1\|^2 + \| \phi_{j\ast}e_2\|^2 = 1$. This means that both $\phi_1$ and $\phi_2$ are conformal maps from $\Sigma$ to $\H^2$ with factor $2$: the metric of $\Sigma$ is given by $g_{\Sigma} = 2 \phi_j^{\ast}\langle \cdot\,,\cdot \rangle$, where $\langle \cdot\,,\cdot \rangle$ is the metric of $\H^2$. Hence, $\Sigma$ is isometric to an open part of $\H^2(-1/2)$ and we are dealing with maps $\phi_1,\phi_2: \Sigma \subseteq \H^2(-1/2) \to \H^2$ such that $\sqrt 2 \phi_1, \sqrt 2 \phi_2: \Sigma \subseteq \H^2(-1/2) \to \H^2(-1/2)$ are isometries. This implies that there exist $A_1,A_2 \in \mathrm{O}^+(1,2)$ such that $A_1 \circ \phi_1 = \mathrm{id}/\sqrt 2$ and $A_2 \circ \phi_2 = \mathrm{id}/\sqrt 2$. Since $\phi_1^{\ast}\omega_{\H^2} = \phi_2^{\ast}\omega_{\H^2}$, we have $\det A_1 = \det A_2$ and we conclude that $\Phi=(\phi_1,\phi_2)$ is congruent to the diagonal immersion $(\mathrm{id},\mathrm{id})/\sqrt 2$ up to the (anti-)holomorphic isometry $\HH \to \HH: (x_1,x_2) \mapsto (A_1x_1,A_2x_2)$.
\end{proof}

\section{Lagrangian surfaces of $\HH$ with parallel second fundamental form}

A submanifold is said to have parallel second fundamental form if $\overline{\nabla} h=0$, where $\overline{\nabla}$ is the connection of Van der Waerden-Bortolotti. Explicitly, we have
$$ (\overline{\nabla}h)(X,Y,Z) = \nabla^{\perp}_Xh(Y,Z) - h(\nabla_XY,Z) - h(Y,\nabla_XZ) $$
for all vector fields $X$, $Y$ and $Z$ tangent to the submanifold, where $\nabla^{\perp}$ is the normal connection of the immersion and $\nabla$ is the Levi-Civita connection of the submanifold. A special case of submanifolds with parallel second fundamental form are totally geodesic submanifolds. 

In this section we classify Lagrangian surfaces of $\HH$ with parallel second fundamental form and in particular totally geodesic Lagrangian surfaces of $\HH$.

\begin{theorem} \label{theo:parallel_sff}
A Lagrangian surface in $\HH$ has parallel second fundamental form if and only if it is one of the following:
\begin{itemize}
\item[(a)] a product of curves of constant geodesic curvature in $\H^2$, as described in  Example \ref{ex:product_of_curves};
\item[(b)] a surface which is, up to a holomorphic or anti-holomorphic isometry of $\HH$, equal to an open part of the diagonal surface described in Example \ref{ex:diagonal}.
\end{itemize}
\end{theorem}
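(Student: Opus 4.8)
The plan is to prove the two implications separately. The ``if'' direction will follow from a direct computation of $\overline{\nabla}h$ for the two model families, while for the ``only if'' direction I would feed the Codazzi equation into Theorem~\ref{theo:characterization_by_Gamma}, so that the classification reduces to the already-understood dichotomy for the function $\Gamma$. The point is that for a surface with $\overline{\nabla}h=0$ the Codazzi equation forces the normal part of the ambient curvature to vanish, and for the specific curvature tensor \eqref{eq:curvature_tensor_HH} this turns out to be a very rigid condition on $\Gamma$.

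For the ``if'' direction, the diagonal of Example~\ref{ex:diagonal} is totally geodesic, so $h=0$ and $\overline{\nabla}h=0$ holds trivially, and this persists after any holomorphic or anti-holomorphic isometry. For a product of curves as in Example~\ref{ex:product_of_curves}, the induced metric is the flat product metric, so the frame $\{e_1,e_2\}=\{(\beta_1',0),(0,\beta_2')\}$ is parallel and $(\overline{\nabla}h)(e_k,e_i,e_j)=\nabla^{\perp}_{e_k}h(e_i,e_j)$. Using the Frenet equations of $\beta_1$ and $\beta_2$, I would check that $\nabla^{\perp}_{e_k}(N_1,0)$ and $\nabla^{\perp}_{e_k}(0,N_2)$ are tangential, so the only normal contributions in differentiating \eqref{eq:sff_product} come from differentiating $\kappa_1$ and $\kappa_2$. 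Hence $\overline{\nabla}h=0$ is equivalent to $\kappa_1$ and $\kappa_2$ being constant, which is exactly case~(a).

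For the ``only if'' direction, assume $\overline{\nabla}h=0$, so that $(R^{\HH}(X,Y)Z)^{\perp}=0$ for all tangent $X,Y,Z$ by Codazzi. Here the Lagrangian condition is crucial: for an orthonormal frame $(e_1,e_2)$ on $\Sigma$, the pair $(Je_1,Je_2)$ is an orthonormal frame of the normal bundle, so the vanishing of the normal curvature is captured by the four scalars $\langle R^{\HH}(e_1,e_2)e_j,Je_k\rangle$. Writing $p_i=\phi_{1\ast}e_i$, $q_i=\phi_{2\ast}e_i$, and using $\pi_{1\ast}Je_i=\phi_1\boxtimes p_i$, $\pi_{2\ast}Je_i=-\phi_2\boxtimes q_i$, I would insert these into \eqref{eq:curvature_tensor_HH} with $c=-1$ and express every resulting inner product through $\Gamma$ via $\omega_{\H^2}(p_1,p_2)=\omega_{\H^2}(q_1,q_2)=\Gamma$ (Lemma~\ref{lem:expressions_Gamma} and \eqref{eq:def_Gamma}). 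The decisive simplification comes from the relations imposed by Lemma~\ref{lem:Lagrangian_plane} and orthonormality of $(e_1,e_2)$, namely $\|p_1\|^2+\|q_1\|^2=\|p_2\|^2+\|q_2\|^2=1$, $\|p_1\|=\|q_2\|$, $\|p_2\|=\|q_1\|$ and, above all, $\langle p_1,p_2\rangle=-\langle q_1,q_2\rangle$. A short computation should then collapse the four equations to
\begin{equation*}
\Gamma\,\langle p_1,p_2\rangle=0,\qquad \Gamma\,\bigl(\|p_1\|^2-\|p_2\|^2\bigr)=0.
\end{equation*}
At a point where $\Gamma\neq 0$ these give $\langle p_1,p_2\rangle=0$ and $\|p_1\|^2=\|p_2\|^2=\tfrac12$, whence $\Gamma^2=\|p_1\|^2\|p_2\|^2-\langle p_1,p_2\rangle^2=\tfrac14$ by the identity established in the proof of Theorem~\ref{theo:characterization_by_Gamma}. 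Thus $\Gamma^2$ takes values only in $\{0,\tfrac14\}$; since $\Gamma^2$ is continuous and $\Sigma$ is connected, either $\Gamma\equiv 0$ or $\Gamma^2\equiv\tfrac14$. In the first case Theorem~\ref{theo:characterization_by_Gamma}(a) identifies $\Sigma$ as a product of curves, and the ``if'' computation read backwards forces the geodesic curvatures to be constant, giving~(a); in the second case Theorem~\ref{theo:characterization_by_Gamma}(b) directly gives the diagonal up to an (anti-)holomorphic isometry, giving~(b).

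The main obstacle I expect is the curvature computation itself: one must evaluate the normal projection of \eqref{eq:curvature_tensor_HH} accurately, and the whole argument hinges on the relation $\langle p_1,p_2\rangle=-\langle q_1,q_2\rangle$ coming from $\langle e_1,e_2\rangle=0$. Without it the two scalar equations would not suffice to pin $\Gamma^2$ down to its two extreme values, and one would be left having to rule out intermediate values of $\Gamma^2$ by a more delicate analysis of $\overline{\nabla}h$ in an adapted frame. With that relation in place, however, no intermediate value of $\Gamma^2$ can occur, and the connectedness argument together with Theorem~\ref{theo:characterization_by_Gamma} completes the classification.
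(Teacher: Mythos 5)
Your proposal is correct and follows essentially the same route as the paper: both directions rest on \eqref{eq:sff_product} for products and total geodesy of the diagonal, and the ``only if'' part feeds the vanishing of $(R^{\HH}(\Phi_{\ast}X,\Phi_{\ast}Y)\Phi_{\ast}Z)^{\perp}$ from Codazzi into the curvature formula \eqref{eq:curvature_tensor_HH} to force $\Gamma^2\in\{0,\tfrac14\}$ pointwise, after which Theorem \ref{theo:characterization_by_Gamma} finishes the classification. The only cosmetic differences are that you use Lemma \ref{lem:Lagrangian_plane}(b) to collapse the three curvature equations to two involving only $\phi_{1\ast}$, where the paper instead combines its three equations with the sum relations $\|\phi_{1\ast}e_i\|^2+\|\phi_{2\ast}e_i\|^2=1$, and that you spell out the continuity/connectedness step that the paper leaves implicit.
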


\begin{proof}
It follows from \eqref{eq:sff_product} that a product of curves has parallel second fundamental form if and only if both component curves have constant geodesic curvature in $\H^2$. Moreover, we already know that the diagonal surface is totally geodesic and hence has parallel second fundamental form. It is therefore sufficient to prove that a Lagrangian immersion $\Phi=(\phi_1,\phi_2): \Sigma \to \HH$ with parallel second fundamental form is either a product of curves or (anti-)holomorphically congruent to an open part of the diagonal surface.

From the equation of Codazzi, we have
\begin{equation} \label{eq:Codazzi}
(\overline{\nabla}h)(X,Y,Z) - (\overline{\nabla}h)(Y,X,Z) = (R^{\HH}(\Phi_{\ast}X,\Phi_{\ast}Y)\Phi_{\ast}Z)^{\perp}
\end{equation}
for all vector fields $X$, $Y$ and $Z$ tangent to $\Sigma$, where the superscript $\perp$ denotes the normal component. Let $(e_1,e_2)$ be a (local) orthonormal frame on $\Sigma$ such that $\omega_{\Sigma}(e_1,e_2)=1$. We will now compute $\langle R^{\HH}(\Phi_{\ast}e_1,\Phi_{\ast}e_2)\Phi_{\ast}e_1, J\Phi_{\ast}e_1 \rangle$, $\langle R^{\HH}(\Phi_{\ast}e_1,\Phi_{\ast}e_2)\Phi_{\ast}e_1, J\Phi_{\ast}e_2 \rangle$ and $\langle R^{\HH}(\Phi_{\ast}e_1,\Phi_{\ast}e_2)\Phi_{\ast}e_2, J\Phi_{\ast}e_2 \rangle$. (Note that, since $\HH$ is a K\"ahler manifold, we know that $\langle R^{\HH}(\Phi_{\ast}e_1,\Phi_{\ast}e_2)\Phi_{\ast}e_2, J\Phi_{\ast}e_1 \rangle$ is equal to $\langle R^{\HH}(\Phi_{\ast}e_1,\Phi_{\ast}e_2)\Phi_{\ast}e_1, J\Phi_{\ast}e_2 \rangle$, so we don't compute the former.) Since $\Phi$ is Lagrangian and $\overline{\nabla}h=0$, equation \eqref{eq:Codazzi} implies that all three inner products are zero.

A straightforward computation, using \eqref{eq:curvature_tensor_HH} and Lemma \ref{lem:expressions_Gamma} (a) yields that
\begin{equation} \label{eq:Codazzi2}
\begin{aligned}
& 0 = \langle R^{\HH}(\Phi_{\ast}e_1,\Phi_{\ast}e_2)\Phi_{\ast}e_1, J\Phi_{\ast}e_1 \rangle = \Gamma (\|\phi_{1\ast}e_1\|^2-\|\phi_{2\ast}e_1\|^2), \\
& 0 = \langle R^{\HH}(\Phi_{\ast}e_1,\Phi_{\ast}e_2)\Phi_{\ast}e_1, J\Phi_{\ast}e_2 \rangle = \Gamma (\langle\phi_{1\ast}e_1,\phi_{1\ast}e_2\rangle - \langle\phi_{2\ast}e_1,\phi_{2\ast}e_2\rangle), \\
& 0 = \langle R^{\HH}(\Phi_{\ast}e_1,\Phi_{\ast}e_2)\Phi_{\ast}e_2, J\Phi_{\ast}e_2 \rangle = \Gamma (\|\phi_{1\ast}e_2\|^2-\|\phi_{2\ast}e_2\|^2).
\end{aligned}
\end{equation}
We now consider two cases. First, if $\Gamma=0$, it follows from Theorem \ref{theo:characterization_by_Gamma} (a), that $\Phi$ is a product of curves. If $\Gamma \neq 0$, then \eqref{eq:Codazzi2} implies that $\|\phi_{1\ast}e_1\|^2 = \|\phi_{2\ast}e_1\|^2$, $\langle\phi_{1\ast}e_1,\phi_{1\ast}e_2\rangle = \langle\phi_{2\ast}e_1,\phi_{2\ast}e_2\rangle$ and $\|\phi_{1\ast}e_2\|^2 = \|\phi_{2\ast}e_2\|^2$. Since we also have $\|\phi_{1\ast}e_1\|^2 + \|\phi_{2\ast}e_1\|^2 = \|e_1\|^2=1$, $\langle\phi_{1\ast}e_1,\phi_{1\ast}e_2\rangle + \langle\phi_{2\ast}e_1,\phi_{2\ast}e_2\rangle = \langle e_1,e_2 \rangle = 0$ and $\|\phi_{1\ast}e_2\|^2 + \|\phi_{2\ast}e_2\|^2 = \|e_2\|^2 = 1$, we obtain that $\|\phi_{1\ast}e_1\| = \|\phi_{2\ast}e_1\| = \|\phi_{1\ast}e_2\| = \|\phi_{2\ast}e_2\| = 1/\sqrt 2$ and $\langle\phi_{1\ast}e_1,\phi_{1\ast}e_2\rangle = \langle\phi_{2\ast}e_1,\phi_{2\ast}e_2\rangle = 0$. It then follows from \eqref{eq:inequality_Gamma} that $\Gamma^2 = 1/4$ and Theorem \ref{theo:characterization_by_Gamma} implies that the surface at hand is (anti-)holomorphically congruent to an open part of the diagonal surface.
\end{proof}

\begin{theorem} \label{theo:totally_geodesic}
A Lagrangian surface in $\HH$ is totally geodesic if and only if it is one of the following:
\begin{itemize}
\item[(a)] a product of geodesics in $\H^2$, as described in  Example \ref{ex:product_of_curves};
\item[(b)] a surface which is, up to a holomorphic or anti-holomorphic isometry of $\HH$, equal to an open part of the diagonal surface described in Example \ref{ex:diagonal}.
\end{itemize}
\end{theorem}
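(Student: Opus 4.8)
The plan is to deduce this theorem almost immediately from Theorem \ref{theo:parallel_sff}. A totally geodesic surface satisfies $h \equiv 0$, which in particular gives $\overline{\nabla}h = 0$, so every totally geodesic Lagrangian surface has parallel second fundamental form. Hence Theorem \ref{theo:parallel_sff} applies and tells us that such a surface is either a product of curves of constant geodesic curvature in $\H^2$ or, up to a holomorphic or anti-holomorphic isometry of $\HH$, an open part of the diagonal. It then remains only to determine precisely which members of these two families are totally geodesic, and to check the converse.

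For the product of curves, I would read off the second fundamental form directly from \eqref{eq:sff_product}: since $h(e_1,e_1) = (\kappa_1 N_1,0)$, $h(e_1,e_2)=0$ and $h(e_2,e_2)=(0,\kappa_2 N_2)$, the immersion is totally geodesic if and only if $\kappa_1 \equiv \kappa_2 \equiv 0$, that is, if and only if both component curves $\beta_1$ and $\beta_2$ are geodesics of $\H^2$. This matches the vanishing of $h$ with the intrinsic description and settles case (a), both directions at once.

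For the diagonal, I would recall that Example \ref{ex:diagonal} already establishes that the diagonal immersion is totally geodesic. Since any isometry of $\HH$ maps totally geodesic surfaces to totally geodesic surfaces, every surface that is (anti-)holomorphically congruent to an open part of the diagonal is again totally geodesic, which gives case (b). The converse for this case is covered by the same observation, so no additional computation is needed.

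Since the substantive classification work was already carried out in the proof of Theorem \ref{theo:parallel_sff}, I do not expect any real obstacle here; the only point requiring care is to correctly identify, within each of the two families, which surfaces have vanishing (rather than merely parallel) second fundamental form, and to state the resulting condition in the intrinsic language of the theorem, namely geodesic factors in case (a) and the diagonal in case (b).
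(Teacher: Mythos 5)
Your proposal is correct and follows essentially the same route as the paper: reduce to Theorem \ref{theo:parallel_sff} via the observation that $h \equiv 0$ implies $\overline{\nabla}h = 0$, then use \eqref{eq:sff_product} to single out products of geodesics, with the diagonal case already settled by Example \ref{ex:diagonal}. The only difference is that you spell out the (anti-)holomorphic congruence and converse details that the paper leaves implicit.
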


\begin{proof}
Since totally geodesic surfaces have parallel second fundamental form, we can use Theorem \ref{theo:parallel_sff}. To finish the proof, it suffices to remark that \eqref{eq:sff_product} implies that a product of curves is totally geodesic if and only if both curves are geodesics.
\end{proof}

\begin{remark}
It is well-known that totally umbilical Lagrangian submanifolds of K\"ahler manifolds are totally geodesic. Hence, Theorem \ref{theo:totally_geodesic} also provides a full classification of totally umbilical Lagrangian surfaces in $\HH$.
\end{remark}

\section{Minimal Lagrangian surfaces of $\HH$}

In this section, we will prove two theorems about minimal Lagrangian surfaces in $\HH$. Theorem \ref{theo:minimal1} states that if such a surface has constant Gaussian curvature, it must be totally geodesic and hence be one of the two surfaces given in Theorem \ref{theo:totally_geodesic}. Theorem \ref{theo:minimal2} states that if such a surface is complete and the function $\Gamma^2$, defined in \eqref{eq:def_Gamma}, is bounded away from $1/4$, then the surface must be a product of geodesics.

We start by proving a lemma about minimal surfaces in $\HH$. Recall that a minimal surface $\Sigma$ with second fundamental form $h$ is said to be \textit{superminimal} if, at every point $p \in \Sigma$, the length of $h(e,e)$ does not depend on the unit vector $e \in T_p\Sigma$. Equivalently: a minimal surface is superminimal if all its ellipses of curvature are circles.

\begin{lemma}
A minimal Lagrangian surface $\Sigma$ of $\HH$ is always superminimal and its Gaussian curvature at a point $p \in \Sigma$ is given by
\begin{equation}\label{eq:Gauss_equation4}
K(p) = -2(\|h(e,e)\|^2+\Gamma(p)^2),
\end{equation} 
where $h$ is the second fundamental form, $e$ is any unit vector tangent to $\Sigma$ at $p$ and $\Gamma$ is the function defined by \eqref{eq:def_Gamma}.
\end{lemma}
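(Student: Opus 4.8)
The plan is to establish the two claims—superminimality and the curvature formula \eqref{eq:Gauss_equation4}—essentially simultaneously, by exploiting the Lagrangian condition to transport the second fundamental form into a single tangent-valued symmetric form and then using minimality to constrain its shape. Because $\Sigma$ is Lagrangian, the complex structure $J$ maps the tangent plane isometrically onto the normal plane, so I would first introduce the standard cubic form $C(X,Y,Z)=\langle h(X,Y),J\Phi_\ast Z\rangle$. For Lagrangian submanifolds of a Kähler manifold this trilinear form is \emph{fully symmetric}: it is symmetric in $X,Y$ because $h$ is, and the symmetry in the last two slots follows from $\langle h(X,Y),J\Phi_\ast Z\rangle=\langle h(X,Z),J\Phi_\ast Y\rangle$, which is the well-known consequence of $\lcc J=0$ together with the fact that $J\Phi_\ast Z$ is normal. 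This total symmetry is the structural fact that makes everything work.

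\medskip

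\noindent\textbf{Main computation.} Next I would fix a point $p$ and an orthonormal basis $(e_1,e_2)$ of $T_p\Sigma$ with $\omega_\Sigma(e_1,e_2)=1$, and write the four numbers $a=C(e_1,e_1,e_1)$, $b=C(e_1,e_1,e_2)$, $c=C(e_1,e_2,e_2)$, $d=C(e_2,e_2,e_2)$. Minimality, $h(e_1,e_1)+h(e_2,e_2)=0$, pairs with $J\Phi_\ast e_1$ and $J\Phi_\ast e_2$ to give $a+c=0$ and $b+d=0$; combined with total symmetry ($b=c$ from symmetry, $c=a$-relations) this collapses the cubic form to a single pair of parameters, forcing $h(e_1,e_1)=-h(e_2,e_2)$ and $h(e_1,e_2)$ orthogonal to it with equal norm. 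A short calculation then shows $\|h(e,e)\|^2$ is independent of the unit vector $e=\cos\theta\,e_1+\sin\theta\,e_2$: writing the entries in terms of $a$ and $b$, the $\theta$-dependence cancels, which is precisely superminimality and says the ellipse of curvature is a circle. For the curvature formula, I would feed these relations into the Gauss equation \eqref{eq:Gauss_equation} of Theorem \ref{theo:Gauss_equation}: since $H=0$, it reads $K=-\tfrac12\|h\|^2-2\Gamma^2$, and the minimal-Lagrangian relations give $\|h\|^2=4\|h(e,e)\|^2$, so $K=-2(\|h(e,e)\|^2+\Gamma^2)$, which is \eqref{eq:Gauss_equation4}.

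\medskip

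\noindent\textbf{Expected obstacle.} The one genuinely delicate point is verifying the symmetry of $C$ in its last two arguments in this explicit extrinsic model, rather than citing it abstractly. I would derive it directly: for a Lagrangian immersion into a Kähler manifold one has $\langle h(X,Y),JZ\rangle-\langle h(X,Z),JY\rangle=\langle\lcc_X(JY),Z\rangle-\langle\lcc_X(JZ),Y\rangle$ after using $\lcc J=0$ and that $JY,JZ$ are normal, and the right-hand side vanishes because $\langle JY,Z\rangle$ is skew and its covariant derivative contributes symmetrically. Since our $J$ is parallel and the model \eqref{eq:defJ} is explicit, this is a routine but careful verification. Once total symmetry and minimality are in hand, the reduction to two parameters $a,b$ and the constancy of $\|h(e,e)\|^2$ over the unit circle in $T_p\Sigma$ are purely algebraic and present no difficulty.
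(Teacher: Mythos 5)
Your proposal is correct and takes essentially the same route as the paper: both use minimality together with the total symmetry of the cubic form $(X,Y,Z)\mapsto\langle h(X,Y),J\Phi_{\ast}Z\rangle$ to obtain $\|h(e_1,e_1)\|=\|h(e_1,e_2)\|$ and $\langle h(e_1,e_1),h(e_1,e_2)\rangle=0$ (i.e.\ superminimality), and then substitute into the Gauss equation of Theorem \ref{theo:Gauss_equation} with $H=0$ to get \eqref{eq:Gauss_equation4}. One small slip in your parenthetical: symmetry does not give $b=c$; rather it gives $C(e_2,e_2,e_1)=C(e_1,e_2,e_2)=c$, so minimality yields $c=-a$ and $d=-b$ --- but since these are the relations you actually use, the argument stands.
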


\begin{proof}
The fact that a minimal surface is superminimal holds for any Lagrangian surface in a K\"ahler manifold, but we will sketch the proof for completeness. Let $\Phi: \Sigma \to \HH$ be a minimal Lagrangian immersion and fix a point $p \in \Sigma$ and an orthonormal basis $\{e_1,e_2\}$ of $T_p\Sigma$. A straightforward computation, using $h(e_1,e_1)+h(e_2,e_2)=0$, yields
\begin{multline*} 
\| h((\cos\theta)e_1 \!+\! (\sin\theta)e_2 , \, (\cos\theta)e_1 \!+\! (\sin\theta)e_2) \|^2 = \|h(e_1,e_1)\|^2 + \|h(e_1,e_2)\|^2 \\ + \frac 12 \cos(4\theta) (\|h(e_1,e_1)\|^2 - \|h(e_1,e_2)\|^2) + \sin(4\theta) \langle h(e_1,e_1),h(e_1,e_2)\rangle
\end{multline*}
for every $\theta \in \R$. Superminimality means that the above expression is independent of $\theta$, which is equivalent to $\|h(e_1,e_1)\|^2 = \|h(e_1,e_2)\|^2$ and $\langle h(e_1,e_1),h(e_1,e_2)\rangle = 0$. These two equalities can be verified using minimality and the fact that the cubic form $(X,Y,Z) \mapsto \langle h(X,Y),J\Phi_{\ast}Z \rangle$ is symmetric in its three arguments, a property that holds for any Lagrangian submanifold of a K\"ahler manifold. 

In this situation, the Gauss equation \eqref{eq:Gauss_equation} reduces to $K(p) = -2\|h(e_1,e_1)\|^2-2\Gamma(p)^2$. By superminimality, one may replace the vector $e_1$ by any unit vector in $T_p\Sigma$, which proves  \eqref{eq:Gauss_equation4}.
\end{proof}

In the rest of this section, we will use a complex coordinate on a surface. Let $(\Sigma,g_{\Sigma})$ be a Riemannian surface and let $u$ and $v$ be local isothermal coordinates on the surface, say $g_{\Sigma}(\partial_u,\partial_v) = 0$ and $g_{\Sigma}(\partial_u,\partial_u) = g_{\Sigma}(\partial_v,\partial_v) = e^{2f(u,v)}$ for some real-valued function $f$. We also assume that these coordinates are compatible with the chosen (local) orientation of $\Sigma$ in the sense that the orthonormal vectors $e_1=e^{-f}\partial_u$ and $e_2=e^{-f}\partial_v$ satisfy $\omega_{\Sigma}(e_1,e_2)=1$. If we put $z=u+iv$ and $\bar z=u-iv$, we have $\partial_z = (\partial_u-i\partial_v)/2$ and $\partial_{\bar z}=(\partial_u+i\partial_v)/2$. If $\Phi: \Sigma \to \HH$ is an isometric immersion and $\langle \cdot\,,\cdot \rangle$ is the complex linear extension of the metric on $\HH$, then 
\begin{equation} \label{eq:isothermal}
\langle \Phi_z,\Phi_z \rangle = \langle \Phi_{\bar z},\Phi_{\bar z} \rangle = 0, \qquad \langle \Phi_z,\Phi_{\bar{z}} \rangle = \frac{e^{2f}}{2}.
\end{equation}

Now assume that $\Phi$ is Lagrangian. We want to use Bochner's formula for the function $\Gamma$, defined in \eqref{eq:def_Gamma}. Therefore, we will compute the Laplacian of $\Gamma$ and the squared norm of the gradient of $\Gamma$. We first prove the following lemma.

\begin{lemma} \label{lem:minimal1}
Let $\Phi=(\phi_1,\phi_2): \Sigma \to \HH$ be a minimal Lagrangian immersion and let $\Gamma$ be the defined by \eqref{eq:def_Gamma}. If $z=u+iv$ is a complex coordinate on $\Sigma$ such that \eqref{eq:isothermal} holds, then
\begin{align}
& \Phi_{z\bar{z}} = \frac{1}{4}e^{2f}\Phi, \label{eq:Phi_z_zbar} \\
& (J\Phi_{\bar{z}})_z = -\frac{i}{2} \Gamma e^{2f} \hat{\Phi}, \label{eq:JPhi_zbar_z} \\
& \Phi_{zz} = 2f_z\Phi_z + 2e^{-2f} \langle \Phi_{zz},J\Phi_z \rangle J\Phi_{\bar{z}} + \frac{1}{2} \langle \Phi_z,\hat{\Phi}_z \rangle \hat{\Phi}, \label{eq:Phi_zz}
\end{align}
where $J$ is the complex linear extension of the complex structure on $\HH$ and $\hat{\Phi} = (\phi_1,-\phi_2)$.
\end{lemma}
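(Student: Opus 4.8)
The plan is to carry out all three computations in the flat ambient space $\R^6_2$, where $\Phi_{z\bar z}$, $(J\Phi_{\bar z})_z$ and $\Phi_{zz}$ are ordinary second partial derivatives, and to decompose them along the orthogonal splitting
$$ \R^6_2 \otimes \C = \underbrace{\spann_{\C}(\Phi_z,\Phi_{\bar z})}_{T\Sigma} \oplus \underbrace{\spann_{\C}(J\Phi_z,J\Phi_{\bar z})}_{\text{normal to }\Sigma\text{ in }\HH} \oplus \underbrace{\spann_{\C}(\Phi,\hat\Phi)}_{\text{normal to }\HH\text{ in }\R^6_2}. $$
This splitting is available because $\Phi$ is Lagrangian (so $J$ maps $T\Sigma$ onto its normal space inside $\HH$) and because the normal space of $\HH=\H^2\times\H^2$ in $\R^6_2$ at $(\phi_1,\phi_2)$ is spanned by the position vectors $(\phi_1,0)$ and $(0,\phi_2)$, equivalently by $\Phi=(\phi_1,\phi_2)$ and $\hat\Phi=(\phi_1,-\phi_2)$, with $\langle\Phi,\Phi\rangle=\langle\hat\Phi,\hat\Phi\rangle=-2$ and $\langle\Phi,\hat\Phi\rangle=0$. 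Three facts will do the work: the second fundamental form of $\HH$ in $\R^6_2$ is the product of the hyperboloid ones, $h_{\HH}((V_1,V_2),(W_1,W_2))=(\langle V_1,W_1\rangle\phi_1,\langle V_2,W_2\rangle\phi_2)$; minimality, which annihilates the $(1,1)$-part $h(\partial_z,\partial_{\bar z})$ of the second fundamental form of $\Sigma$; and the Lagrangian condition together with the full symmetry of the cubic form $(X,Y,Z)\mapsto\langle h(X,Y),J\Phi_{\ast}Z\rangle$.

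For \eqref{eq:Phi_z_zbar} I would apply the Gauss formula for $\Sigma\subseteq\HH\subseteq\R^6_2$ to $\Phi_{z\bar z}$. Its part tangent to $\Sigma$ is $\nabla^{\Sigma}_{\partial_z}\partial_{\bar z}$, which vanishes because the only nonzero Christoffel symbols of a conformal metric are $\Gamma^z_{zz}$ and $\Gamma^{\bar z}_{\bar z\bar z}$; its part normal to $\Sigma$ but tangent to $\HH$ is $h(\partial_z,\partial_{\bar z})$, which vanishes by minimality; so only the $\HH$-normal term $h_{\HH}(\Phi_z,\Phi_{\bar z})=(\langle\phi_{1,z},\phi_{1,\bar z}\rangle\phi_1,\langle\phi_{2,z},\phi_{2,\bar z}\rangle\phi_2)$ survives. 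A short computation in the frame $e_1=e^{-f}\partial_u$, $e_2=e^{-f}\partial_v$ gives $\langle\phi_{j,z},\phi_{j,\bar z}\rangle=\tfrac14 e^{2f}(\|\phi_{j\ast}e_1\|^2+\|\phi_{j\ast}e_2\|^2)$, and condition (c) of Lemma \ref{lem:Lagrangian_plane} forces the bracket to equal $1$ for each $j$, so that $\Phi_{z\bar z}=\tfrac14 e^{2f}\Phi$.

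For \eqref{eq:JPhi_zbar_z} I would write $J\Phi_{\bar z}=(\phi_1\boxtimes\phi_{1,\bar z},-\phi_2\boxtimes\phi_{2,\bar z})$ and differentiate in $z$. In each factor the term $\phi_j\boxtimes\phi_{j,z\bar z}$ drops out because, by \eqref{eq:Phi_z_zbar}, $\phi_{j,z\bar z}$ is parallel to $\phi_j$; what remains is $\phi_{j,z}\boxtimes\phi_{j,\bar z}=\tfrac{i}{2}\,\phi_{j,u}\boxtimes\phi_{j,v}=\tfrac{i}{2}e^{2f}\,\phi_{j\ast}e_1\boxtimes\phi_{j\ast}e_2$, which equals $-\tfrac{i}{2}\Gamma e^{2f}\phi_j$ by Lemma \ref{lem:expressions_Gamma}(b). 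Reassembling the two factors, and keeping track of the sign in the second one, yields exactly $(-\tfrac{i}{2}\Gamma e^{2f}\phi_1,\tfrac{i}{2}\Gamma e^{2f}\phi_2)=-\tfrac{i}{2}\Gamma e^{2f}\hat\Phi$.

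For \eqref{eq:Phi_zz} I would decompose $\Phi_{zz}$ along the frame above and read off each coefficient by differentiating the relations \eqref{eq:isothermal} together with $\langle\Phi_z,\Phi\rangle=\langle\Phi_z,\hat\Phi\rangle=0$ (both consequences of $\langle\phi_{j,z},\phi_j\rangle=0$). Differentiating $\langle\Phi_z,\Phi_z\rangle=0$ and $\langle\Phi_z,\Phi_{\bar z}\rangle=\tfrac12 e^{2f}$ gives $\langle\Phi_{zz},\Phi_z\rangle=0$ and $\langle\Phi_{zz},\Phi_{\bar z}\rangle=f_z e^{2f}$, so the tangential part is $2f_z\Phi_z$; the $\HH$-normal part is $\tfrac12\langle\Phi_z,\hat\Phi_z\rangle\hat\Phi$, since $\langle\Phi_{zz},\Phi\rangle=0$ and $\langle\Phi_{zz},\hat\Phi\rangle=-\langle\Phi_z,\hat\Phi_z\rangle$. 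The one genuinely delicate point, and the step I expect to be the main obstacle, is showing that the $J\Phi_z$-coefficient of the middle part vanishes, i.e.\ that $\langle\Phi_{zz},J\Phi_{\bar z}\rangle=0$: here I would use that $J\Phi_{\bar z}$ is tangent to $\HH$ and normal to $\Sigma$, so $\langle\Phi_{zz},J\Phi_{\bar z}\rangle=\langle h(\partial_z,\partial_z),J\Phi_{\bar z}\rangle$, and then invoke the symmetry of the cubic form to rewrite this as $\langle h(\partial_z,\partial_{\bar z}),J\Phi_z\rangle$, which is zero by minimality. What survives is precisely the coefficient $2e^{-2f}\langle\Phi_{zz},J\Phi_z\rangle$ of $J\Phi_{\bar z}$, giving \eqref{eq:Phi_zz}.
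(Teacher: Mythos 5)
Your proof is correct and is essentially the paper's own argument: the same flat-space computation in $\R^6_2$, the same use of the second fundamental form of $\HH$ in $\R^6_2$ together with minimality and Lemma \ref{lem:Lagrangian_plane}(c) for \eqref{eq:Phi_z_zbar}, the identical cross-product computation with Lemma \ref{lem:expressions_Gamma}(b) for \eqref{eq:JPhi_zbar_z}, and the same expansion of $\Phi_{zz}$ in the basis $\{\Phi_z,\Phi_{\bar z},J\Phi_z,J\Phi_{\bar z},\Phi,\hat{\Phi}\}$ via the same five inner products for \eqref{eq:Phi_zz}. The only (harmless) deviation is the step you flagged as delicate: the paper obtains $\langle\Phi_{zz},J\Phi_{\bar z}\rangle=0$ by differentiating $\langle\Phi_z,J\Phi_{\bar z}\rangle=0$ and substituting \eqref{eq:JPhi_zbar_z} together with $\langle\Phi_z,\hat{\Phi}\rangle=0$, whereas you use minimality plus the total symmetry of the cubic form $(X,Y,Z)\mapsto\langle h(X,Y),J\Phi_{\ast}Z\rangle$ --- both are valid one-line arguments, and the cubic-form symmetry is in any case the tool the paper itself uses in the superminimality lemma and in deriving \eqref{eq:Gamma3}.
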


\begin{proof}
Since $\Phi$ is minimal in $\HH$, the mean curvature vector field of $\Phi$, seen as an immersion of $\Sigma$ into $\R^3_1 \times \R^3_1 \cong \R^6_2$, coincides with the mean curvature vector field of $\H^2 \times \H^2$ as a submanifold of $\R^6_2$, restricted to the image of $\Phi$. Since the second fundamental form of the latter submanifold is $\tilde h_{(x_1,x_2)}((v_1,v_2),(w_1,w_2)) = (\langle v_1,w_1 \rangle x_1, \langle v_2,w_2 \rangle x_2)$ for all $x_1, x_2 \in \H^2$ and all $v_1,w_1 \in T_{x_1}\H^2$ and $v_2,w_2 \in T_{x_2}\H^2$, the mean curvature vector field of $\Phi: \Sigma \to \R^6_2$ is
\begin{equation} \label{eq:mean_curvature1}
H = \frac 12 (\phi_1,\phi_2) = \frac 12 \Phi.
\end{equation}
On the other hand, this mean curvature vector field can be expressed using the Laplacian of $\Sigma$ as
\begin{equation} \label{eq:mean_curvature2}
H = \frac 12 \Delta\Phi = \frac 12 e^{-2f}(\Phi_{uu}+\Phi_{vv}) = 2 e^{-2f} \Phi_{z\bar z}.
\end{equation}
By comparing \eqref{eq:mean_curvature1} and \eqref{eq:mean_curvature2}, we obtain \eqref{eq:Phi_z_zbar}.

By definition of $J$, we have that 
\begin{equation*}
(J\Phi_{\bar z})_z = (\phi_1 \boxtimes \phi_{1\bar z}, -\phi_2 \boxtimes \phi_{2\bar z})_z = (\phi_{1z} \boxtimes \phi_{1\bar z}, -\phi_{2z} \boxtimes \phi_{2\bar z}) + (\phi_1 \boxtimes \phi_{1z\bar z}, -\phi_2 \boxtimes \phi_{2z\bar z}). 
\end{equation*}
Equation \eqref{eq:Phi_z_zbar} implies that the second term on the right hand side vanishes and rewriting the first term in real coordinates yields
\begin{equation*}
(J\Phi_{\bar z})_z = (\phi_{1z} \boxtimes \phi_{1\bar z}, -\phi_{2z} \boxtimes \phi_{2\bar z}) = \frac i2 (\phi_{1u} \boxtimes \phi_{1v}, -\phi_{2u} \boxtimes \phi_{2v}).
\end{equation*} 
Now, let $(e_1,e_2)$ be the orthonormal frame given by $e_1=e^{-f}\partial_u$ and $e_2=e^{-f}\partial_v$. We know from Lemma \ref{lem:expressions_Gamma} (b) that $\phi_{j\ast}e_1 \boxtimes \phi_{j\ast}e_2 = -\Gamma \phi_j$ for $j \in \{1,2\}$ and therefore
\begin{equation*}
(J\Phi_{\bar z})_z = \frac i2 e^{2f} (\phi_{1\ast}e_1 \boxtimes \phi_{1\ast}e_2, -\phi_{2\ast}e_1 \boxtimes \phi_{2\ast}e_2) = \frac i2 e^{2f} (-\Gamma \phi_1,\Gamma \phi_2) = -\frac{i}{2} \Gamma e^{2f} (\phi_1,-\phi_2),
\end{equation*} 
which finishes the proof of \eqref{eq:JPhi_zbar_z}.

Finally, to prove \eqref{eq:Phi_zz}, we express $\Phi_{zz}$ in the basis $\{ \Phi_z, \Phi_{\bar z}, J\Phi_z, J\Phi_{\bar z}, \Phi, \hat{\Phi} \}$. Since the only non-zero inner products of these vectors are 
\begin{equation} \label{eq:scalar_products}
\langle \Phi_z,\Phi_{\bar{z}} \rangle = \langle J\Phi_z,J\Phi_{\bar{z}} \rangle = \frac{e^{2f}}{2}, \qquad \langle \Phi,\Phi \rangle = \langle \hat{\Phi},\hat{\Phi} \rangle = -2,
\end{equation}
the components are determined by scalar products of $\Phi_{zz}$ with suitable vectors as follows:
\begin{multline} \label{eq:Phi_zz1}
\Phi_{zz} = 2e^{-2f} \langle \Phi_{zz},\Phi_{\bar{z}} \rangle \Phi_z 
+ 2e^{-2f} \langle \Phi_{zz},\Phi_z \rangle \Phi_{\bar{z}} \\
+ 2e^{-2f} \langle \Phi_{zz},J\Phi_{\bar{z}} \rangle J\Phi_z
+ 2e^{-2f} \langle \Phi_{zz},J\Phi_z \rangle J\Phi_{\bar{z}}
- \frac{1}{2} \langle \Phi_{zz},\Phi \rangle \Phi
- \frac{1}{2} \langle \Phi_{zz},\hat{\Phi} \rangle \hat{\Phi}.
\end{multline}
When comparing \eqref{eq:Phi_zz1} to \eqref{eq:Phi_zz}, we see that it is now sufficient to prove the following five equalities: $\langle \Phi_{zz},\Phi_{\bar z} \rangle = e^{2f}f_z$, $\langle \Phi_{zz},\Phi_z \rangle = 0$, $\langle \Phi_{zz},J\Phi_{\bar z} \rangle = 0$, $\langle \Phi_{zz},\Phi \rangle = 0$ and $\langle \Phi_{zz},\hat{\Phi} \rangle = - \langle \Phi_z,\Phi_{\bar z} \rangle$. These follow from straightforward computations using that \eqref{eq:scalar_products} gives the only non-zero scalar products of the six basis vectors, \eqref{eq:Phi_z_zbar} and \eqref{eq:JPhi_zbar_z}.
\end{proof}

We can now compute the squared norm of the gradient and the Laplacian of the function $\Gamma$ on a minimal Lagrangian surface in $\HH$.

\begin{lemma}\label{lem:Gamma_isoparametric}
Let $\Phi: \Sigma \to \HH$ be a minimal Lagrangian immersion and let $\Gamma$ be the function defined by \eqref{eq:def_Gamma}. Then the squared norm of the gradient and the Laplacian of $\Gamma$ are
\begin{equation*}
\|\nabla \Gamma\|^2 = \frac 12 (4\Gamma^2-1)(2\Gamma^2+K), \qquad \Delta \Gamma = \Gamma(4\Gamma^2+4K+1),
\end{equation*}
where $K$ is the Gaussian curvature of $\Sigma$. In particular, if $K$ is constant, the function $\Gamma$ is isoparametric.
\end{lemma}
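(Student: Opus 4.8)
The plan is to work entirely in the complex coordinate $z=u+iv$ of \eqref{eq:isothermal} and reduce the statement to computing the two derivatives $\Gamma_z$ and $\Gamma_{z\bar z}$. Indeed, in these isothermal coordinates one has $\|\nabla\Gamma\|^2 = 4e^{-2f}\Gamma_z\Gamma_{\bar z}$ and $\Delta\Gamma = 4e^{-2f}\Gamma_{z\bar z}$, the latter with the sign convention already used in \eqref{eq:mean_curvature2}. So everything comes down to differentiating $\Gamma$ twice and re-expressing the result through $\Gamma$ and $K$.

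First I would introduce two auxiliary functions and record their relation to $\Gamma$ and $K$. Set $\mu := \langle \Phi_z,\hat{\Phi}_z\rangle$ and $\psi := \langle \Phi_{zz},J\Phi_z\rangle$; the latter is the $(3,0)$-component of the cubic form $(X,Y,Z)\mapsto\langle h(X,Y),J\Phi_\ast Z\rangle$, which is totally symmetric on any Lagrangian surface. Splitting $\mu$ into its two factors, using $\langle\Phi_z,\Phi_z\rangle=0$ and \eqref{eq:inequality_Gamma}, I expect $|\mu|^2=\tfrac{e^{4f}}{4}(1-4\Gamma^2)$. Expanding $\psi$ in the orthonormal frame $(e_1,e_2)$ and using minimality (trace-freeness) and symmetry of the cubic form gives $|\psi|^2=\tfrac{e^{6f}}{4}\|h(e,e)\|^2$, which via the Gauss equation \eqref{eq:Gauss_equation4} becomes $|\psi|^2=-\tfrac{e^{6f}}{8}(K+2\Gamma^2)$. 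These two norm identities are exactly what will turn the raw derivatives into the stated polynomials in $\Gamma$ and $K$.

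Next comes the first derivative. Differentiating $\mu$ and inserting \eqref{eq:Phi_zz} and \eqref{eq:JPhi_zbar_z}, together with the product-structure symmetry $\langle\Phi_{zz},\hat{\Phi}_z\rangle=\langle\Phi_z,\hat{\Phi}_{zz}\rangle$ (both sides equal $\langle\phi_{1zz},\phi_{1z}\rangle-\langle\phi_{2zz},\phi_{2z}\rangle$), I expect $\mu_z=4f_z\mu-4i\Gamma\psi$; and from \eqref{eq:Phi_z_zbar} and its $\hat\Phi$-analogue one gets $\mu_{\bar z}=0$, so $\mu\,dz^2$ is holomorphic. Differentiating the identity $1-4\Gamma^2=4e^{-4f}\mu\bar\mu$ in $z$ then makes the $f_z$-terms cancel and yields the clean formula $\Gamma_z=2ie^{-4f}\bar\mu\,\psi$ (the division by $\Gamma$ is valid on the dense open set $\{\Gamma\neq0\}$, and the polynomial identity extends by continuity, noting that a minimal product of curves is totally geodesic so $\psi=0$ on the interior of $\{\Gamma=0\}$). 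Feeding this into $\|\nabla\Gamma\|^2=4e^{-2f}|\Gamma_z|^2=16e^{-8f}|\mu|^2|\psi|^2$ and substituting the two norm identities produces $\|\nabla\Gamma\|^2=\tfrac12(4\Gamma^2-1)(2\Gamma^2+K)$.

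For the Laplacian I would differentiate $\Gamma_z=2ie^{-4f}\bar\mu\,\psi$ in $\bar z$, which needs $\bar\mu_{\bar z}=\overline{\mu_z}$ and, crucially, $\psi_{\bar z}$. Computing $\psi_{\bar z}$ is the main obstacle: one must differentiate $J\Phi_z$ treating $J$ as applied pointwise through the cross products, recognize $(J\Phi_z)_{\bar z}=-(J\Phi_{\bar z})_z=\tfrac{i}{2}\Gamma e^{2f}\hat{\Phi}$ from \eqref{eq:JPhi_zbar_z}, and combine it with $\langle\Phi_{zz},\hat{\Phi}\rangle=-\mu$ (from \eqref{eq:Phi_zz} and \eqref{eq:scalar_products}) to obtain $\psi_{\bar z}=-\tfrac{i}{2}\Gamma e^{2f}\mu$; the nonvanishing of this term reflects that the ambient curvature obstructs holomorphicity of the cubic differential. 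The surrounding difficulty is the careful bookkeeping of which inner products vanish, all of which relies on $\langle\phi_j,\phi_j\rangle=-1$ and the cross-product identities \eqref{eq:properties_cross_product}. After substitution the $f_{\bar z}$-terms again cancel, and using the two norm identities one lands on $\Gamma_{z\bar z}=e^{2f}\Gamma(K+\Gamma^2+\tfrac14)$, hence $\Delta\Gamma=\Gamma(4\Gamma^2+4K+1)$. Finally, if $K$ is constant, both right-hand sides are functions of $\Gamma$ alone, which is precisely the statement that $\Gamma$ is isoparametric.
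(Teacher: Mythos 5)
Your proposal is correct: all the identities you assert check out, namely $|\mu|^2=\tfrac{e^{4f}}{4}(1-4\Gamma^2)$, $|\psi|^2=-\tfrac{e^{6f}}{8}(K+2\Gamma^2)$, $\mu_{\bar z}=0$, $\mu_z=4f_z\mu-4i\Gamma\psi$, $\psi_{\bar z}=-\tfrac{i}{2}\Gamma e^{2f}\mu$, and the final substitutions do yield the two stated formulas. Your framework is the paper's own: the same complex coordinate, the same structural equations \eqref{eq:Phi_z_zbar}--\eqref{eq:Phi_zz}, and your two norm identities are exactly the paper's \eqref{eq:Gamma4} and \eqref{eq:Gamma6}, while your $\bar\mu_{\bar z}$ and $\psi_{\bar z}$ are precisely \eqref{eq:Gamma8} and \eqref{eq:Gamma7}, so the Laplacian computation is identical. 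The one genuine divergence is the derivation of $\Gamma_z=2ie^{-4f}\bar\mu\,\psi$: you differentiate the modulus identity $1-4\Gamma^2=4e^{-4f}\mu\bar\mu$, which forces a division by $\Gamma$ and a separate treatment of the set where $\Gamma$ vanishes. The paper avoids this entirely by differentiating $\langle J\Phi_{\bar z},\hat\Phi_z\rangle=-i\Gamma e^{2f}$ (its equation \eqref{eq:Gamma1}) with respect to $z$, which produces $\Gamma_z=2ie^{-4f}\langle\Phi_{zz},J\Phi_z\rangle\langle\Phi_{\bar z},\hat\Phi_{\bar z}\rangle$ directly, unconditionally and with no case analysis. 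What your detour buys is the explicit observation that $\mu\,dz^2$ is a holomorphic quadratic differential on any minimal Lagrangian surface in $\HH$ -- a structural fact the paper never states and which is of independent interest; what it costs is the degenerate-set argument. On that argument, one correction of wording rather than of substance: $\{\Gamma\neq 0\}$ is \emph{not} dense in general (for a product of geodesics $\Gamma\equiv 0$ on all of $\Sigma$), so you should not call it a dense open set; but your argument survives as written because you handle the interior of $\{\Gamma=0\}$ separately (there the surface is locally a minimal product of curves, hence totally geodesic by \eqref{eq:sff_product}, so $\psi=0$ and both sides vanish), and the union of $\{\Gamma\neq0\}$ with that interior \emph{is} dense, after which continuity extends the identity everywhere and smoothness of both sides justifies differentiating it in $\bar z$ to get the Laplacian.
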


\begin{proof}
Using a complex coordinate $z=u+iv$ satisfying \eqref{eq:isothermal}, we have to compute 
\begin{align}
& \|\nabla\Gamma\|^2 = e^{-2f}(\Gamma_u^2+\Gamma_v^2) = 4e^{-2f} |\Gamma_z|^2, \label{eq:gradient}\\ 
& \Delta \Gamma = e^{-2f}(\Gamma_{uu}+\Gamma_{vv}) = 4e^{-2f} \Gamma_{z \bar z}. \label{eq:Laplacian}
\end{align}

First, note that, using \eqref{eq:JPhi_zbar_z}, we have
\begin{equation} \label{eq:Gamma1}
\langle J\Phi_{\bar{z}},\hat{\Phi}_z \rangle = \langle J\Phi_{\bar{z}},\hat{\Phi} \rangle_z - \langle (J\Phi_{\bar{z}})_z,\hat{\Phi} \rangle = -i \Gamma e^{2f}.
\end{equation}
Deriving this equality with respect to $z$ gives $\langle (J\Phi_{\bar{z}})_z,\hat{\Phi}_z \rangle + \langle J\Phi_{\bar{z}},\hat{\Phi}_{zz} \rangle = -i \Gamma_z e^{2f} - 2i\Gamma e^{2f}f_z$, which, by using \eqref{eq:JPhi_zbar_z}, \eqref{eq:Phi_zz} and \eqref{eq:Gamma1}, can be simplified to
\begin{equation} \label{eq:Gamma2}
\Gamma_z = 2i e^{-4f} \langle \Phi_{zz},J\Phi_z \rangle \langle \Phi_{\bar z},\hat{\Phi}_{\bar z} \rangle.
\end{equation}
	
We now investigate the two scalar products appearing in \eqref{eq:Gamma2}. In the computations below, we use the orthonormal frame given by $e_1=e^{-f}\partial_u$ and $e_2=e^{-f}\partial_v$. First, remark that
\begin{equation*}
\begin{aligned}
\langle \Phi_{zz},J\Phi_z \rangle & = \frac 18 ( \langle \Phi_{uu},J\Phi_u \rangle - \langle \Phi_{vv},J\Phi_u \rangle - 2 \langle \Phi_{uv},J\Phi_v \rangle ) \\
& \qquad - \frac i8 ( 2 \langle \Phi_{uv},J\Phi_u \rangle + \langle \Phi_{uu},J\Phi_v \rangle - \langle \Phi_{vv},J\Phi_v \rangle ) \\
& = \frac 18 e^{3f} ( \langle h(e_1,e_1),J\Phi_{\ast}e_1 \rangle - \langle h(e_2,e_2),J\Phi_{\ast}e_1 \rangle - 2 \langle h(e_1,e_2),J\Phi_{\ast}e_2 \rangle ) \\
& \qquad - \frac i8 e^{3f} ( 2 \langle h(e_1,e_2),J\Phi_{\ast}e_1 \rangle + \langle h(e_1,e_1),J\Phi_{\ast}e_2 \rangle - \langle h(e_2,e_2),J\Phi_{\ast}e_2 \rangle ),
\end{aligned}
\end{equation*}	
which, by minimality and symmetry of the cubic form, can be reduced to	
\begin{equation} \label{eq:Gamma3}
\langle \Phi_{zz},J\Phi_z \rangle = \frac 12 e^{3f} ( \langle h(e_1,e_1),J\Phi_{\ast}e_1 \rangle - i \langle h(e_1,e_1),J\Phi_{\ast}e_2 \rangle ).
\end{equation}	
Combining \eqref{eq:Gamma3} and \eqref{eq:Gauss_equation4} yields
\begin{equation} \label{eq:Gamma4}
| \langle \Phi_{zz},J\Phi_z \rangle |^2 = \frac 14 e^{6f} \|h(e_1,e_1)\|^2 = - \frac 18 e^{6f} (2\Gamma^2 + K).
\end{equation}

Next, denoting $\Phi=(\phi_1,\phi_2)$ as usual, we have
\begin{equation*}
\begin{aligned}
\langle \Phi_{\bar z},\hat{\Phi}_{\bar z} \rangle & = \frac 14 ( \|\phi_{1u}\|^2 \!-\! \|\phi_{2u}\|^2 \!-\! \|\phi_{1v}\|^2 \!+\! \|\phi_{2v}\|^2 ) + \frac i2 ( \langle \phi_{1u},\phi_{1v} \rangle \!-\! \langle \phi_{2u},\phi_{2v} \rangle ) \\
& = \frac 14 e^{2\omega} ( \|\phi_{1\ast}e_1\|^2 \!-\! \|\phi_{2\ast}e_1\|^2 \!-\! \|\phi_{1\ast}e_2\|^2 \!+\! \|\phi_{2\ast}e_2\|^2 ) + \frac i2 ( \langle \phi_{1\ast}e_1,\phi_{1\ast}e_2 \rangle \!-\! \langle \phi_{2\ast}e_1,\phi_{2\ast}e_2 \rangle ),
\end{aligned}
\end{equation*}	
which, by Lemma \ref{lem:Lagrangian_plane} can be simplified to
\begin{equation} \label{eq:Gamma5}
\langle \Phi_{\bar z},\hat{\Phi}_{\bar z} \rangle = \frac 12 e^{2f} ( \|\phi_{1\ast}e_1\|^2 \!-\! \|\phi_{2\ast}e_1\|^2 + i ( \langle \phi_{1\ast}e_1,\phi_{1\ast}e_2 \rangle \!-\! \langle \phi_{2\ast}e_1,\phi_{2\ast}e_2 \rangle ) ).
\end{equation}
A straightforward computation, using \eqref{eq:Gamma5}, the facts that $\|\phi_{1\ast}e_1\|^2 + \|\phi_{2\ast}e_1\|^2 = \|e_1\|^2 = 1$ and $\langle \phi_{1\ast}e_1,\phi_{1\ast}e_2 \rangle + \langle \phi_{2\ast}e_1,\phi_{2\ast}e_2 \rangle = \langle e_1,e_2 \rangle = 0$, Lemma \ref{lem:Lagrangian_plane}, the third property in \eqref{eq:properties_cross_product} and Lemma \ref{lem:expressions_Gamma} (c), yields
\begin{equation} \label{eq:Gamma6}
|\langle \Phi_{\bar z},\hat{\Phi}_{\bar z} \rangle|^2 = \frac 14 e^{4f} (1-4\Gamma^2).
\end{equation}

We can now compute the squared norm of the gradient of $\Gamma$ from \eqref{eq:gradient}, \eqref{eq:Gamma2}, \eqref{eq:Gamma4} and \eqref{eq:Gamma6} to be
\begin{equation*}
\|\nabla\Gamma\|^2 = 16e^{-10f} |\langle \Phi_{zz},J\Phi_z \rangle|^2 |\langle \Phi_{\bar z},\hat{\Phi}_{\bar z} \rangle|^2 = \frac 12 (4 \Gamma^2-1)(2\Gamma^2+K).
\end{equation*}

To find an expression for the Laplacian of $\Gamma$, we need to derive \eqref{eq:Gamma2} with respect to $\bar z$. Using \eqref{eq:Phi_z_zbar}, \eqref{eq:JPhi_zbar_z} and \eqref{eq:Phi_zz}, we obtain
\begin{equation} \label{eq:Gamma7}
\langle \Phi_{zz},J\Phi_z \rangle_{\bar z} = -\frac i2 \Gamma e^{2f} \langle \Phi_z,\hat{\Phi}_z \rangle
\end{equation}
and using \eqref{eq:Phi_zz} and \eqref{eq:Gamma1} yields
\begin{equation} \label{eq:Gamma8}
\langle \Phi_{\bar z},\hat{\Phi}_{\bar z} \rangle_{\bar z} = 4 f_{\bar z} \langle \Phi_{\bar z},\hat{\Phi}_{\bar z} \rangle + 4i\Gamma \langle \Phi_{\bar z \bar z},J\Phi_{\bar z} \rangle.
\end{equation}
By combining \eqref{eq:Laplacian}, \eqref{eq:Gamma2}, \eqref{eq:Gamma7}, \eqref{eq:Gamma8}, \eqref{eq:Gamma4} and \eqref{eq:Gamma6}, we obtain
\begin{equation*}
\Delta\Gamma = 4e^{-2f} ( 2i e^{-4f} \langle \Phi_{zz},J\Phi_z \rangle \langle \Phi_{\bar z},\hat{\Phi}_{\bar z} \rangle )_{\bar z} = \Gamma (4\Gamma^2+4K+1).
\end{equation*}
\end{proof}

We can now prove the main theorems of this section.

\begin{theorem} \label{theo:minimal1}
Let $\Sigma$ be a minimal Lagrangian surface in $\HH$. If $\Sigma$ has constant Gaussian curvature, then the immersion is totally geodesic. Hence, it is a product of geodesics or (anti-) holomorphically congruent to an open part of the diagonal surface of Example \ref{ex:diagonal}. In particular, the constant Gaussian curvature can only be $0$ or $-1/2$.
\end{theorem}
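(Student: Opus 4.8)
The plan is to use the constancy of $K$ to pin down the function $\Gamma$ of \eqref{eq:def_Gamma}, and then to read off the geometry from Theorem~\ref{theo:characterization_by_Gamma} and Theorem~\ref{theo:totally_geodesic}. The starting point is Lemma~\ref{lem:Gamma_isoparametric}: when $K$ is constant,
$$ \|\nabla\Gamma\|^2 = \tfrac12(4\Gamma^2-1)(2\Gamma^2+K), \qquad \Delta\Gamma = \Gamma(4\Gamma^2+4K+1), $$
so that $\|\nabla\Gamma\|^2$ and $\Delta\Gamma$ are polynomials in $\Gamma$ and $\Gamma$ is isoparametric. I would split the argument according to whether $\Gamma$ is constant or not.

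Suppose first that $\Gamma$ is constant. Then $\nabla\Gamma=0$ and $\Delta\Gamma=0$, so the two identities reduce to $(4\Gamma^2-1)(2\Gamma^2+K)=0$ and $\Gamma(4\Gamma^2+4K+1)=0$. Solving this system under the constraint $0\le\Gamma^2\le1/4$ leaves only $\Gamma^2\equiv0$ with $K=0$, or $\Gamma^2\equiv1/4$ with $K=-1/2$; in particular every value $0<\Gamma^2<1/4$ is excluded. By Theorem~\ref{theo:characterization_by_Gamma} the surface is then a product of curves or an open part of the diagonal, and minimality together with \eqref{eq:sff_product} upgrades the product of curves to a product of geodesics. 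In both cases the surface is totally geodesic, by Theorem~\ref{theo:totally_geodesic}, with $K\in\{0,-1/2\}$, which is exactly the assertion. (Equivalently, the branch $2\Gamma^2+K=0$ reads $K=-2\Gamma^2$, and then the superminimal Gauss equation \eqref{eq:Gauss_equation4} gives $\|h(e,e)\|^2\equiv0$, i.e. $h\equiv0$.)

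The heart of the proof is to exclude a non-constant $\Gamma$. On the open set $U$ where $\nabla\Gamma\ne0$ the map $\Gamma$ is a submersion and hence takes a full interval of values, so any polynomial relation in $\Gamma$ that holds on $U$ must be a polynomial identity. I would compute $\|\mathrm{Hess}\,\Gamma\|^2$ on $U$ in two independent ways and compare. On one hand, Bochner's formula on a surface,
$$ \tfrac12\Delta\|\nabla\Gamma\|^2 = \|\mathrm{Hess}\,\Gamma\|^2 + \langle\nabla\Gamma,\nabla\Delta\Gamma\rangle + K\|\nabla\Gamma\|^2, $$
combined with the chain-rule evaluations $\Delta\|\nabla\Gamma\|^2 = G'(\Gamma)\Delta\Gamma+G''(\Gamma)\|\nabla\Gamma\|^2$ and $\langle\nabla\Gamma,\nabla\Delta\Gamma\rangle=L'(\Gamma)\|\nabla\Gamma\|^2$ (writing $G$ and $L$ for the two polynomials above), yields $\|\mathrm{Hess}\,\Gamma\|^2$ as an explicit polynomial in $\Gamma$. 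On the other hand, in the orthonormal frame with $E_1=\nabla\Gamma/\|\nabla\Gamma\|$ one has $\mathrm{Hess}\,\Gamma(E_1,E_2)=0$, $\mathrm{Hess}\,\Gamma(E_1,E_1)=\tfrac12 G'(\Gamma)$ and $\mathrm{Hess}\,\Gamma(E_2,E_2)=L(\Gamma)-\tfrac12 G'(\Gamma)$, which gives a second polynomial expression for $\|\mathrm{Hess}\,\Gamma\|^2$. Equating the two forces an identity of polynomials in $\Gamma$.

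The main obstacle is precisely this coefficient comparison. The two polynomial expressions share the same leading term (both have $\Gamma^6$-coefficient $128$), so no contradiction is visible at top order and one must track the lower coefficients carefully. The point is that different coefficients demand different constant values of $K$: for instance the $\Gamma^4$-coefficient forces $K=-4/9$, whereas the constant term forces $K\in\{0,-2/3\}$, which is impossible for a single constant $K$. This rules out a non-constant $\Gamma$, and together with the constant case it completes the proof, including the fact that $K$ can only equal $0$ or $-1/2$.
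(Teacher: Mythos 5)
Your proposal is correct and takes essentially the same route as the paper's proof: the paper also combines Bochner's formula with Lemma \ref{lem:Gamma_isoparametric}, diagonalizes the Hessian in the frame $e_1 = \nabla\Gamma/\|\nabla\Gamma\|$, and derives on the set $\{\nabla\Gamma \neq 0\}$ the polynomial relation $\tfrac12(4\Gamma^2-1)\bigl(2(9K+4)\Gamma^2 - K(3K+2)\bigr) = 0$, whose coefficients encode exactly your incompatible conditions ($36K+16=0$ versus $K(3K+2)=0$), before settling the constant-$\Gamma$ case just as you do. The only discrepancy is the cosmetic value of the shared top coefficient (it is $80$ for the two expressions of $\|\mathrm{Hess}\,\Gamma\|^2$, and $128$ only if one compares the two full sides of Bochner's identity), which does not affect the argument since all that matters is that the leading terms cancel.
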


\begin{proof}
Let $U$ be the open subset of $\Sigma$ on which $\nabla \Gamma \neq 0$. We will show that $U = \emptyset$ and hence that $\Gamma$ constant on $\Sigma$ (which we assume to be connected). For an orthonormal frame $\{e_1,e_2\}$ on $U$, we have Bochner's formula
\begin{equation} \label{eq:Bochner}
\frac{1}{2} \Delta\|\nabla\Gamma\|^2 = K \|\nabla\Gamma\|^2 + \langle \nabla\Gamma,\nabla(\Delta\Gamma) \rangle + \sum_{i=1}^{2} \|\nabla_{e_i}\nabla\Gamma\|^2.
\end{equation}
In general, if $F$ is a differentiable real-valued function in one variable, then $\nabla (F\circ\Gamma)=(F'\circ\Gamma) \nabla\Gamma$ and $\Delta (F\circ\Gamma) = (F'\circ\Gamma) \Delta\Gamma + (F''\circ\Gamma)\|\nabla\Gamma\|^2$. Therefore, using Lemma \ref{lem:Gamma_isoparametric} and the fact that $K$ is constant, we obtain 
\begin{align}
& \nabla(\Delta\Gamma) = (12\Gamma^2+4K+1) \nabla\Gamma, \label{eq:Bochner1} \\
& \frac 12 \Delta\|\nabla\Gamma\|^2 = \Gamma(8\Gamma^2+2K-1)\Delta\Gamma + (24\Gamma^2+2K-1)\|\nabla\Gamma\|^2. \label{eq:Bochner2}
\end{align}
Moreover, if we take the orthonormal frame $\{e_1,e_2\}$ on $U$ such that $e_1=\nabla\Gamma/\|\nabla\Gamma\|$, a straightforward computation using Lemma \ref{lem:Gamma_isoparametric} implies
\begin{align}
\sum_{i=1}^{2} \|\nabla_{e_i} \nabla\Gamma \|^2 &= \langle \nabla_{e_1}\nabla\Gamma, e_1 \rangle^2 + \langle \nabla_{e_1}\nabla\Gamma, e_2 \rangle^2 + \langle \nabla_{e_2}\nabla\Gamma, e_1 \rangle^2 + \langle \nabla_{e_2}\nabla\Gamma, e_2 \rangle^2 \label{eq:Bochner3} \\
&= \langle \nabla_{e_1}\nabla\Gamma, e_1 \rangle^2 + 2 \langle \nabla_{e_2}\nabla\Gamma, e_1 \rangle^2 + \langle \nabla_{e_2}\nabla\Gamma, e_2 \rangle^2 \nonumber \\
&= (e_1(\|\nabla\Gamma\|))^2 + 2 \cdot 0 + (\Delta\Gamma - e_1(\|\nabla\Gamma\|))^2 \nonumber \\
&= \Gamma^2(8\Gamma^2+2K-1)^2 + 4\Gamma^2 (2\Gamma^2-K-1)^2. \nonumber
\end{align}
Substituting \eqref{eq:Bochner1}, \eqref{eq:Bochner2} and \eqref{eq:Bochner3} into \eqref{eq:Bochner} and using Lemma \ref{lem:Gamma_isoparametric} again, yields
\begin{equation*}
\frac 12 (4\Gamma^2-1)(2(9K+4)\Gamma^2 - K(3K+2)) = 0.
\end{equation*}
There are no values of $K$ for which the polynomial on the left hand side is identically zero. Hence $\Gamma$ is constant on every connected component of $U$, which is impossible since $\nabla \Gamma \neq 0$ on $U$. We conclude that $U= \emptyset$, which means that $\nabla\Gamma$ vanishes everywhere, such that $\Gamma$ is constant on $\Sigma$. 

Again from Lemma \ref{lem:Gamma_isoparametric}, we see that there are two possibilities, namely (1) $\Gamma=0$ and $K=0$ and (2) $\Gamma^2=1/4$ and $K=-1/2$. Theorem \ref{theo:characterization_by_Gamma} finishes the proof. Note that a product of curves is only minimal if both curves are geodesics, as can be seen from \eqref{eq:sff_product}.
\end{proof}

\begin{theorem} \label{theo:minimal2}
Let $\Sigma$ be a complete minimal Lagrangian surface in $\HH$. If the function $\Gamma$, defined by $\eqref{eq:def_Gamma}$, satisfies 
$$ \Gamma^2 \leq \frac 14 - \varepsilon $$
for some positive constant $\varepsilon$, then $\Gamma=0$ and $\Sigma$ is a product of geodesics.
\end{theorem}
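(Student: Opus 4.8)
The plan is to apply a maximum-principle argument to the bounded function $\Gamma^2$, combining the two identities of Lemma \ref{lem:Gamma_isoparametric} with the Gauss equation \eqref{eq:Gauss_equation4}. The first step is to eliminate the curvature $K$ from Lemma \ref{lem:Gamma_isoparametric} by substituting $K = -2(\|h(e,e)\|^2 + \Gamma^2)$, which gives $2\Gamma^2 + K = -2\|h(e,e)\|^2$ and hence
\begin{align*}
\|\nabla\Gamma\|^2 &= (1-4\Gamma^2)\,\|h(e,e)\|^2, \\
\Delta(\Gamma^2) &= 2\Gamma\,\Delta\Gamma + 2\|\nabla\Gamma\|^2 = 2\Gamma^2(1-4\Gamma^2) + 2(1-12\Gamma^2)\,\|h(e,e)\|^2,
\end{align*}
together with $\|\nabla(\Gamma^2)\|^2 = 4\Gamma^2(1-4\Gamma^2)\,\|h(e,e)\|^2$. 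The purpose of this rewriting is that the hypothesis $\Gamma^2 \le \tfrac14 - \varepsilon$ forces the factor $1-4\Gamma^2$ to be uniformly positive, $1-4\Gamma^2 \ge 4\varepsilon$.

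Next I would run the following extremum argument. Set $s := \sup_\Sigma \Gamma^2 \le \tfrac14 - \varepsilon$; if $s = 0$ we are done, so suppose $s > 0$. If the supremum were attained at an interior point $p$, then $\Gamma(p)\neq 0$ and $\nabla(\Gamma^2)(p) = 2\Gamma\nabla\Gamma(p)=0$ would give $\nabla\Gamma(p)=0$, so the first identity (where $1-4\Gamma^2>0$ is essential) yields $\|h(e,e)\|^2(p) = 0$, and the second identity gives $\Delta(\Gamma^2)(p) = 2\Gamma^2(1-4\Gamma^2)(p) > 0$, contradicting that $p$ is a maximum. To reach the complete, non-compact case I would invoke the Omori--Yau maximum principle for the bounded-above function $\Gamma^2$: it produces a sequence $(p_k)$ with $\Gamma^2(p_k) \to s$, $\|\nabla(\Gamma^2)\|(p_k) \to 0$ and $\limsup_k \Delta(\Gamma^2)(p_k) \le 0$. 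Since $\|\nabla(\Gamma^2)\|^2(p_k) = 4\Gamma^2(1-4\Gamma^2)\|h(e,e)\|^2 \to 0$ while $\Gamma^2(p_k)\to s>0$ and $1-4\Gamma^2\ge 4\varepsilon$, we get $\|h(e,e)\|^2(p_k)\to 0$, whence $\Delta(\Gamma^2)(p_k) \to 2s(1-4s) \ge 8\varepsilon s > 0$, contradicting $\limsup_k\Delta(\Gamma^2)(p_k)\le 0$. Therefore $s = 0$, that is $\Gamma \equiv 0$. Once $\Gamma \equiv 0$, Theorem \ref{theo:characterization_by_Gamma}(a) shows the surface is a product of curves, and \eqref{eq:sff_product} together with minimality forces both curves to be geodesics, exactly as in the last line of the proof of Theorem \ref{theo:minimal1}.

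The main obstacle I anticipate is justifying the maximum principle on a surface that is only assumed complete. The Omori--Yau principle in its standard form requires the Gaussian curvature to be bounded from below, whereas \eqref{eq:Gauss_equation4} gives only $K = -2\|h(e,e)\|^2 - 2\Gamma^2 \le 0$, with no a priori upper bound on $\|h(e,e)\|^2$. Moreover, since $\Gamma^2$ is bounded, a lower bound on $K$ is equivalent to an upper bound on $\|h(e,e)\|^2$, equivalently (through $\|\nabla\Gamma\|^2 = (1-4\Gamma^2)\|h(e,e)\|^2$ with $1-4\Gamma^2\in[4\varepsilon,1]$) to an upper bound on $\|\nabla\Gamma\|^2$, none of which is immediate from the hypotheses. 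The real work is therefore to secure the hypotheses of a suitable weak maximum principle at infinity, either by first establishing such a curvature or volume-growth bound, or by appealing to a Khas'minskii/Pigola--Rigoli--Setti type criterion valid without a pointwise lower curvature bound. It is precisely the uniform gap $1-4\Gamma^2 \ge 4\varepsilon$ that keeps the relevant coefficients under control, so that the smallness of the gradient transfers to smallness of $\|h(e,e)\|^2$ and the final sign contradiction can be closed.
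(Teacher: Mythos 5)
Your identities are all correct (they are equivalent to Lemma \ref{lem:Gamma_isoparametric} rewritten via \eqref{eq:Gauss_equation4}), and the final sign contradiction would indeed close the proof \emph{if} the Omori--Yau principle applied. But the gap you flag in your last paragraph is not a technicality to be deferred --- it is the entire difficulty of the theorem. Omori--Yau needs the Gaussian curvature of $\Sigma$ bounded from below, which by \eqref{eq:Gauss_equation4} is equivalent to a uniform bound on $\|h(e,e)\|^2$, and nothing in the hypotheses gives this; the weaker Pigola--Rigoli--Setti/Khas'minskii criteria you invoke likewise require some a priori input (stochastic completeness, volume growth, a suitable exhaustion function) that is equally unavailable. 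As written, your argument only settles the case where $\sup_\Sigma\Gamma^2$ is attained (e.g.\ $\Sigma$ compact), and "the real work", as you call it, is left undone. So the proposal is an incomplete proof.

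The paper closes exactly this gap by a device that needs no curvature bound at all: a conformal change of metric. Combining the two identities of Lemma \ref{lem:Gamma_isoparametric} yields the clean equation
\begin{equation*}
\Delta\log\bigl(1-4\Gamma^2\bigr)=4K,
\end{equation*}
which says that $g_0=\sqrt{1-4\Gamma^2}\,g_\Sigma$ is a \emph{flat} metric on $\Sigma$. The hypothesis $1-4\Gamma^2\geq 4\varepsilon$ enters precisely here: it pinches the conformal factor between $2\sqrt{\varepsilon}$ and $1$, so $g_0$ is complete, and it makes $\log(1-4\Gamma^2)$ bounded. A complete flat surface is parabolic, i.e.\ admits no non-constant superharmonic function bounded from below; since $\Delta_0\log(1-4\Gamma^2)=4K/\sqrt{1-4\Gamma^2}\leq 0$ (because $K\leq 0$ by \eqref{eq:Gauss_equation4}), the function $\log(1-4\Gamma^2)$ must be constant, forcing $K\equiv 0$, and Theorem \ref{theo:minimal1} then gives $\Gamma\equiv 0$ and the product-of-geodesics conclusion. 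In short: where you try to run a maximum principle on $(\Sigma,g_\Sigma)$, whose geometry at infinity is uncontrolled, the paper transplants the problem onto the flat metric $g_0$, whose parabolicity is free. If you want to salvage your scheme, the honest route is to prove this parabolicity statement first --- and the conformal trick is the way to do it.
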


\begin{proof}	
The function $1-4\Gamma^2$ is positive and it follows from Lemma \ref{lem:Gamma_isoparametric} that
\begin{equation} \label{eq:laplacian_of_1-4Gamma2}
\Delta \log (1-4\Gamma^2) = 4K.
\end{equation}
If $g_{\Sigma}$ is the metric of $\Sigma$, then \eqref{eq:laplacian_of_1-4Gamma2} implies that $g_0 = \sqrt{1-4\Gamma^2}\,g_{\Sigma}$ is a flat metric on $\Sigma$. Since $g_{\Sigma}$ is complete, $g_0$ is also complete. If $\Delta_0$ is the Laplacian of $g_0$, we deduce from \eqref{eq:laplacian_of_1-4Gamma2} and \eqref{eq:Gauss_equation4} that 
\begin{equation} \label{eq:laplacian2_of_1-4Gamma2}
\Delta_0 \log (1-4\Gamma^2)=\frac{4K}{\sqrt{1-4\Gamma^2}}\leq 0.
\end{equation}
Since $1-4\Gamma^2$ is bounded away from zero, $\log(1-4\Gamma^2)$ is a bounded function on a complete flat surface and \eqref{eq:laplacian2_of_1-4Gamma2} shows that it is superharmonic. Such a function must be constant and, again from \eqref{eq:laplacian2_of_1-4Gamma2}, we obtain that $K\equiv 0$. The result now follows from Theorem \ref{theo:minimal1}.
\end{proof}

\end{document}